\documentclass[preprint,12pt]{elsarticle}

\biboptions{numbers,sort&compress}

\usepackage{amsmath,amssymb,amsthm}
\usepackage{graphicx}
\usepackage{subcaption}
\usepackage{placeins}   
\usepackage{enumitem}
\usepackage{xcolor}
\usepackage{hyperref}
\usepackage{adjustbox}

\hypersetup{
  colorlinks=false,
  allbordercolors=blue,
  pdfborderstyle={/S/U/W 1}
}

\usepackage{booktabs}

\numberwithin{equation}{section}

\newtheorem{theorem}{Theorem}[section]
\newtheorem{lemma}[theorem]{Lemma}
\newtheorem{proposition}[theorem]{Proposition}

\theoremstyle{definition}
\newtheorem{definition}[theorem]{Definition}

\theoremstyle{remark}
\newtheorem{remark}[theorem]{Remark}
\usepackage{array}
\usepackage{tabularx}
\newcolumntype{L}{>{\raggedright\arraybackslash}X}
\newcolumntype{P}[1]{>{\raggedright\arraybackslash}p{#1}}
\begin{document}



\title{Green–Function and Information–Geometric Correspondences Between Inverse Eigenvalue Loci of Generalized Lucas Sequences and the Mandelbrot Set}

\author{Arturo Ortiz--Tapia}
\affiliation{organization={Independent Researcher},%
             city={Beaumont},%
             state={TX},%
             country={USA}}
\ead{egl.arturo.ortizta@unadmexico.mx}

\begin{abstract}
We investigate geometric, potential-theoretic, and information-theoretic correspondences between the inverse eigenvalue loci of companion matrices associated with generalized Lucas sequences and the boundary of the Mandelbrot set. 
Through systematic numerical experiments, we show that these algebraic spectral loci exhibit a striking low-distortion geometric correspondence with the Mandelbrot boundary at macroscopic scales, together with a coherent organization within its external potential field, characterized by concentration along narrow equipotential annuli of the Mandelbrot Green function.

This correspondence is quantified using a suite of complementary diagnostics, including optimal transport matching, Procrustes alignment, local distortion measures, fractal and spectral statistics, Green-function–based potential comparisons, and convex simplex update analyses. 
Taken together, these results indicate that the observed similarity extends beyond visual resemblance, reflecting shared structural organization across geometric, harmonic, and statistical levels.

While the present work is entirely numerical in nature, it establishes a robust multi-scale framework for comparing algebraic spectral constructions with nonlinear dynamical fractals, and it highlights several avenues for future analytical investigation.
\end{abstract}

\maketitle

\par\medskip
{\noindent\small\textbf{Keywords:}
inverse eigenvalue loci; Lucas sequences; companion matrices; Mandelbrot set;
quasi-conformal geometry; conformal mapping; Green function; potential theory; optimal transport and Procrustes\par}
\medskip

\section{Introduction}\label{Sec:Introduction}

\subsection{Iteration-modified and viscosity-regularized fractal constructions}\label{sec:recent_advances}

Beyond the classical quadratic iteration defining the Mandelbrot and Julia sets,
a substantial recent literature has explored \emph{modified iteration schemes}
that generate alternative fractal families.

\paragraph{Mann and Picard--Mann iteration frameworks.}
One important direction replaces the standard Picard iteration
$z_{n+1}=f_c(z_n)$
with convex-combination schemes of Mann or Picard--Mann type,
\[
z_{n+1} = (1-\alpha_n) z_n + \alpha_n f_c(z_n),
\]
or related hybrid forms.
Such constructions alter convergence behavior, basin geometry,
and boundary structure of the resulting parameter sets.
These variants have been studied both analytically and numerically,
with emphasis on stability regions and deformation of the classical
Mandelbrot picture
\cite{Chugh2015MannIterationFractals,Arora2018PicardMannDynamics}.

\paragraph{Higher-order and generalized Mandelbrot families.}
A complementary direction considers higher-degree or
higher-order modifications of the quadratic map,
for example replacing $z^2+c$ with $z^d+c$ or with
polynomial compositions that produce generalized
“Mandelbrot-type” parameter sets.
Such extensions modify bifurcation structure and
self-similarity patterns while retaining an iterative
dynamical origin
\cite{BrannerHubbard1988HigherDegree,milnor2006dynamics}.

\paragraph{Viscosity and flow-based regularization.}
More recently, viscosity approximations and flow-based
regularization schemes have been introduced to
smooth or deform classical fractal boundaries.
These approaches treat the iteration as embedded in
a dissipative or regularized framework,
producing families of deformed fractals
that interpolate between classical and smoothed geometries
\cite{Kumari2022ViscosityFractals,Nawaz2024MIteration}.

\medskip

All of the above constructions share a common feature:
\emph{the dynamical rule $f_c$ is modified or embedded into
an alternative iterative scheme.}
The resulting sets remain fundamentally iteration-generated.

\medskip

In contrast, the inverse eigenvalue loci $\Lambda$ studied here
are \emph{not} produced by iterating a complex map.
They arise from algebraic spectral data of finite-dimensional
companion matrices.
No modification of the quadratic iteration is introduced,
and no alternative dynamical rule is defined.
Our objective is therefore not to generate a new Mandelbrot-type
family, but to test whether a non-iterative spectral locus
exhibits quantitative geometric and information-theoretic
proximity to the canonical boundary $\partial M$.

\medskip

The present work should thus be viewed as orthogonal and
complementary to iteration-modification studies:
rather than altering dynamics to create new fractals,
we investigate whether an algebraically defined
spectral construction aligns, in a multiscale sense,
with the geometric and potential-theoretic structure
of the classical Mandelbrot boundary.

\subsection{Main goals and contributions}
The central purpose of this paper is to establish a quantitative and
multiscale correspondence between the inverse eigenvalue loci
$\Lambda$ of generalized Lucas companion matrices and the Mandelbrot
boundary $\partial M$.
The approach has three complementary components:

\begin{enumerate}
\item \textbf{Geometric alignment and distortion analysis.}
Using optimal-transport matching together with rigid Procrustes
alignment, $\Lambda$ and $\partial M$ are placed in a common geometric
frame.
Procrustes alignment removes only Euclidean gauge degrees of freedom
(translation, rotation, and uniform scaling); subsequent diagnostics
(curvature, variograms, spectral decay, potential correlations, and
information divergences) are computed after this gauge fixing and are
not implied by it.
The resulting distortion statistics remain uniformly small at large
scales, suggesting a near angle-preserving correspondence of the global
envelopes.

\item \textbf{Multiscale geometric and spectral diagnostics.}
Curvature, distortion, fractal and multifractal dimensions, Fourier
spectral decay rates, spatial correlation functions, symmetry
properties, and potential--Laplacian fields are compared for both
systems.
Across these measures, the inverse eigenvalue loci reproduce the
macroscopic organization of $\partial M$ while regularizing its finest
filamentary features.

\item \textbf{Classical convex simplex update.}
After geometric alignment, both point clouds are discretized on a
common histogram grid and connected via a classical Kullback--Leibler
contractive flow on the probability simplex.
This provides an information-theoretic complement to the geometric and
spectral comparisons, yielding a monotone path in KL-divergence between
the two distributions.
\end{enumerate}

Taken together, these elements support the view that $\Lambda$ and $\partial M$
lie in a narrow quasi-conformal and information-geometric neighborhood at large scales.

Uniform conformality is not claimed, and elevated distortion is expected
near regions of high curvature or boundary transitions.

\subsection{Potential-theoretic perspective, scope, and organization}\label{sec:intro_potential}
Beyond boundary geometry, the Mandelbrot set carries a canonical external
potential structure. The Green function $g_M(c)$ of the complement
$\mathbb{C}\setminus M$ measures the escape rate of the critical orbit and
organizes the exterior into equipotential level sets; its harmonic conjugate
defines the B\"ottcher coordinate $\Phi(c)$, which linearizes the dynamics at
infinity. From this viewpoint, $\partial M$ appears as the zero-level set of a
global equilibrium potential.

This raises a natural question: do inverse eigenvalue loci align only with the
\emph{shape} of $\partial M$, or do they also sample its external potential
structure? The present work answers this affirmatively. It is shown that inverse
eigenvalue spectra concentrate on narrow annuli of nearly constant Mandelbrot
Green potential, with family-dependent but stable Green-function statistics.
Thus, the correspondence between $\Lambda$ and $\partial M$ extends beyond
boundary shape to the external potential field governing the dynamics.

Supporting analytic results underlying the convex simplex update, together
with stability analyses and numerical experiments validating the assumed scaling
regimes, are provided in the appendices.

\paragraph{Scope and limitations.}
The correspondence results of this paper are established at a quantitative and
numerical level: the optimal-transport/Procrustes alignment and the associated
distortion statistics provide evidence for near angle-preserving behavior of the
large-scale envelopes, but they do not constitute a proof of uniform conformality
or a global analytic conjugacy. In particular, elevated distortion is expected
near regions of high curvature or boundary transitions, and the local conformal
chart constructions are intended as stable numerical diagnostics rather than as an
existence theorem for conformal parametrizations of $\partial M$.
Analytical extensions of these observations are discussed as future work, with
supporting stability analyses and theoretical details provided in
Appendices~\ref{AppA:GeoInformation}--\ref{AppC:QuasiConformal}.

\paragraph{Modified iteration schemes and related fractal variants.}
A substantial body of work has investigated modified iteration schemes for the quadratic family, including Mann-type averaging procedures \cite{RaniKumar2004}, Picard--Mann hybrid iterations \cite{Chauhan2013}, higher-order polynomial parameter spaces \cite{BrannerHubbard1988HigherDegree}, and viscosity-approximation frameworks for nonlinear fixed-point stabilization \cite{Moudafi2000,Xu2004}. 
These approaches generate alternative fractal families by explicitly altering the dynamical update rule
\[
z_{n+1} = f_c(z_n),
\]
either through averaging, higher-degree compositions, or operator-regularization mechanisms.

In contrast, the present work does not modify the quadratic iteration. 
The Lucas inverse eigenvalue loci arise from algebraic spectral constructions associated with companion matrices of linear recurrences. 
Our objective is not to generate new dynamical variants of the Mandelbrot set, but rather to compare the geometric and information-theoretic properties of these spectral loci to the canonical boundary $\partial M$. 
The distinction is structural: the cited works perturb dynamics, whereas our framework constructs loci independently of quadratic iteration and studies their diagnostic correspondence.
It is natural to ask whether the inverse eigenvalue loci considered here bear
a similar resemblance to other well-known fractal sets. In particular, numerical
comparisons with Julia sets were already explored in earlier work
\cite{tapia2024goldenratioslucassequences}. While various fractal geometries arise
throughout complex dynamics, to the best of our knowledge there is no other
commonly studied fractal whose boundary exhibits a comparable visual and
structural resemblance to that of the Mandelbrot set in the present context.

The remainder of the paper is organized as follows:
Section~\ref{Sec:Methods} introduces the datasets and numerical methods;
Section~\ref{Sec:Results} presents the geometric, spectral, and information-theoretic results;
Section~\ref{Sec:Discussion} discusses interpretation, limitations, and analytical outlook;
and Section~\ref{Sec:Conclusions} summarizes the main findings.

\section{Methods}\label{Sec:Methods}

Our numerical methodology follows a deliberate local--to--global progression.
At the \emph{local analytic} level, we construct numerical conformal charts for
inverse eigenvalue loci and quantify the extent to which the observed
correspondence with $\partial M$ can be explained by angle-preserving (or mildly
quasi-conformal) deformation.  This local layer provides direct control of
Cauchy--Riemann defect indicators, Beltrami-type distortion summaries, and
neighborhood-scale angle and shape statistics.

We then complement the local analysis with a \emph{global potential-theoretic}
layer based on the Mandelbrot Green function and its equipotential structure,
which organizes the exterior of $M$ into canonical level sets.  This global
view tests whether the aligned spectral loci not only resemble $\partial M$ in
shape, but also sit coherently within the same external equilibrium geometry.

Between these two extremes, we use scale-sensitive geometric and statistical
diagnostics---variograms, (multi)fractal exponents, spectral decay, and spatial
correlation measures---to bridge neighborhood distortion with global
organization.  The guiding principle is that agreement across multiple probes,
computed independently and at different scales, is more informative than any
single diagnostic in isolation.

\begin{remark}[Yoneda-type viewpoint]
The comparison framework is reminiscent of the Yoneda principle: rather than
attempting to ``look inside'' an object directly, we characterize it by its
responses to a family of probes.  Here the probes are concrete analytic,
probabilistic, and information-theoretic functionals (alignment-invariant shape
diagnostics, potential-theoretic fields, and divergence-based summaries), not
morphisms in a category.  No categorical formalism is used in the results; the
analogy is intended only as a conceptual guide for why consistent agreement
across independent probes is nontrivial evidence of correspondence.
\end{remark}

\paragraph{Terminological clarification (quasi-conformal diagnostics).}
Throughout this section, we use the term \emph{quasi-conformal} in a
numerical and resolution-dependent sense.  Specifically, we quantify
local distortion via least-squares linearization, singular-value ratios,
and angular misalignment statistics computed on discrete point clouds.
These diagnostics measure whether the observed correspondence is
\emph{nearly angle-preserving} at the scale resolved by the data.

We do \emph{not} claim the existence of a globally defined analytic
quasi-conformal homeomorphism between the inverse eigenvalue loci and
$\partial M$ in the strict sense of geometric function theory, which
would require control of measurable Beltrami coefficients and appeal to
the measurable Riemann mapping theorem.
Establishing such an analytic conjugacy is beyond the scope of the
present numerical study and is left as a direction for future work.

Accordingly, statements of “low-distortion correspondence” in this paper
should be read as shorthand for the empirically observed phenomenon of
uniformly bounded local distortion and near angle preservation across
the sampled loci at finite numerical resolution.

\begin{table}[!htbp]
\centering
\small
\begin{tabularx}{\textwidth}{L l L}
\toprule
Methodological tool &
Scale &
Role in the analysis \\
\midrule
Cauchy--Riemann defects, Beltrami coefficients &
Local &
Local distortion and near angle-preserving behavior (numerical) \\

Angle/shape distortion (PCA + local linear fit) &
Local &
Neighborhood-scale deviation from conformal behavior \\

Variograms &
Mesoscopic to global &
Correlation structure across spatial scales \\

Fractal dimension &
Global &
Asymptotic scaling behavior \\

Multifractal spectra &
Global &
Distribution of singularities \\

Spectral decay and correlations &
Global &
Long-range organization \\

Mandelbrot Green function &
Global &
External equilibrium potential \\

Kullback--Leibler (GI) flow on histograms &
Global (distributional) &
Distributional discrepancy visualization under KL contraction (resolution-stable; not evidence by itself) \\
\bottomrule
\end{tabularx}
\caption{Summary of numerical diagnostics organized by the spatial or structural
scale they probe and their role in establishing a quantitative, multiscale
correspondence between inverse eigenvalue loci and the Mandelbrot boundary.
No single diagnostic is decisive; consistency across scales is the primary
evidentiary criterion.}
\label{tab:methods_scale_map}
\end{table}

\FloatBarrier

\subsection{Sampling the inverse eigenvalue loci}

We first recall the definition of the inverse eigenvalue loci associated
with the sequence of generalized Lucas companion matrices $A_n$.
Let $\mathrm{spec}(A_n)$ denote the spectrum of $A_n$. 
The infinite spectral locus is defined as
\begin{equation}
\Lambda
:=
\bigcup_{n \ge 2}
\left\{
\frac{1}{\lambda}
\;:\;
\lambda \in \mathrm{spec}(A_n)
\right\},
\label{EqInvEigenLoci}
\end{equation}
that is, the union over all matrix sizes of the reciprocals of
their eigenvalues.

\medskip

Since $\Lambda$ is defined as an infinite union, all numerical
experiments necessarily work with finite approximations.
For computational purposes we therefore restrict to a finite
range of matrix sizes and apply a stability threshold
to avoid numerical amplification effects.

More precisely, we fix the set of sizes
\[
n \in \{10,20,50,100,150,200,250,300,350,400,450,500\},
\]
compute the spectrum of each $A_n$ in standard double precision,
and omit those eigenvalues with modulus $|\lambda|<10^{-10}$
before forming reciprocals.
The cutoff $10^{-10}$ serves purely as a numerical
stability threshold and is kept fixed across all experiments.
It does not represent a claim of geometric resolution.

This produces the finite sample
\[
\Lambda_N 
=
\bigcup_{2 \le n \le N}
\Big\{
\tfrac{1}{\lambda_k^{(n)}}
\ \text{retained after thresholding}
\Big\}
\subset \Lambda.
\]

All geometric and statistical diagnostics in this paper are
performed on such finite samples $\Lambda_N$,
with $N$ chosen sufficiently large that the loci are visually
and statistically stable at the scales under consideration.

The truncation parameters (maximum matrix size and eigenvalue modulus
threshold) were selected to balance numerical stability with geometric
completeness.
All reported diagnostics were verified to remain stable under moderate
variations of these parameters, in the sense that large-scale geometry,
alignment statistics, and potential-field comparisons remained unchanged
within numerical resolution.
The truncation therefore acts only as a regularization mechanism and
does not drive the observed correspondence.

\subsection{Local Feature Extraction}

For a point cloud $X = \{x_1,\dots,x_N\} \subset \mathbb{R}^2$, we estimate a local geometric orientation at each point $x_i$ using the standard PCA-based method for neighborhood structure analysis \cite{Jolliffe2002PCA, BelkinNiyogi2008Laplacian}. 
Let $\mathcal{N}_k(x_i)$ denote the $k$ nearest neighbors of $x_i$ in Euclidean distance. 
Centering these neighbors at their mean $\mu_i$, we compute the covariance matrix
\[
\Sigma_i = \frac{1}{k} \sum_{y \in \mathcal{N}_k(x_i)} (y-\mu_i)(y-\mu_i)^\top.
\]
The principal eigenvector of $\Sigma_i$ provides an estimate of the dominant local tangent direction at $x_i$, recorded as an orientation feature $v_i$. 
This local covariance (PCA) estimator is standard in point-cloud geometry
and manifold learning, where neighborhood covariance approximates
differential structure from discrete samples \cite{CoifmanLafon2006Diffusion,PeyreCuturi2019OT}.

\subsection{Mandelbrot boundary sampling}

The quadratic family
\[
f_c(z) = z^2 + c, \qquad z,c \in \mathbb{C},
\]
generates the critical orbit
\[
z_{n+1} = f_c(z_n), \qquad z_0 = 0.
\]
The Mandelbrot set is defined as
\[
M = \{\, c \in \mathbb{C} : \{z_n\}_{n\ge0} \ \text{remains bounded} \,\},
\]
that is, the set of parameters for which the critical orbit does not diverge
\cite{DouadyHubbard1985}.

To approximate the boundary $\partial M$, we employ the classical
\emph{distance estimator} technique
\cite{DouadyHubbard1985,PeitgenRichter1986}.
Alongside the orbit $\{z_n\}$, we iterate the parameter derivative
\[
\frac{dz_{n+1}}{dc} = 2 z_n \frac{dz_n}{dc} + 1,
\qquad \frac{dz_0}{dc} = 0.
\]
If the orbit escapes beyond a prescribed bailout radius $R$ at iterate $n$
(we use $R = 10^6$), the exterior distance to $\partial M$ is estimated by
\begin{equation}
D(c) \;\approx\;
\frac{|z_n| \log |z_n|}
     {\left|\frac{dz_n}{dc}\right|}.
\end{equation}
If no escape occurs within a fixed iteration budget (we use $n \le 200$),
we set $D(c)=0$.

To obtain a numerical approximation of $\partial M$, we discretize a
rectangular region $\Omega \subset \mathbb{C}$ on a uniform grid, evaluate
$D(c)$ at each grid point, and retain parameters satisfying
\[
0 < D(c) < \tau,
\]
with threshold $\tau = 10^{-3}$.  This procedure selects a thin exterior
band near the boundary.  From the retained points we randomly subsample a
fixed number of parameters to produce a high-resolution but numerically
manageable boundary point cloud for subsequent alignment and comparison.

\subsection{Optimal Transport Matching}\label{sec:OT_matching}

Given feature-augmented point clouds 
$X = \{x_1,\dots,x_n\}$ and $Y = \{y_1,\dots,y_m\}$, 
we assign uniform probability weights
\[
a_i = \tfrac{1}{n}, \qquad b_j = \tfrac{1}{m}.
\]
Let $M \in \mathbb{R}^{n \times m}$ denote the cost matrix
\[
M_{ij} = \| x_i - y_j \|_2.
\]
To align both point sets, we compute an entropic-regularized optimal transport plan 
\cite{Cuturi2013Sinkhorn, PeyreCuturi2019OT}. 
The goal is to find a coupling $\Gamma \in \mathbb{R}^{n \times m}$ minimizing
\[
\min_{\Gamma \in U(a,b)} 
\langle M, \Gamma \rangle + \varepsilon\, \mathrm{KL}\big(\Gamma \,\|\, a \otimes b\big),
\]
where $U(a,b)$ is the transport polytope of matrices with marginals $a$ and $b$, and 
$\varepsilon > 0$ is the regularization parameter.  
The problem is efficiently solved using the Sinkhorn iterative scaling algorithm 
\cite{Cuturi2013Sinkhorn}, which yields the OT plan $\Gamma^*$.

For matching, we extract a one-to-one correspondence by
\[
\pi(i) = \arg\max_j \Gamma^*_{ij},
\]
assigning each $x_i$ to the point $y_j$ receiving the highest transported mass.  
This provides a deterministic alignment of inverse eigenvalue loci and Mandelbrot samples after optimal transport.

Importantly, optimal transport is used here only to establish a
correspondence between discrete samples; it does not enforce geometric
similarity and does not by itself imply low distortion or shape agreement.

\subsection{Procrustes Alignment}\label{sec:Procrustes}

Given matched pairs $(x_i, y_{\pi(i)})$, we compute an optimal rigid alignment of $X$ to $Y$ using the classical orthogonal Procrustes method \cite{Schonemann1966Procrustes, GowerDijksterhuis2004Procrustes}.  
Let $\mu_X$ and $\mu_Y$ denote the centroids of the matched point sets, and define centered coordinates
\[
\tilde{X} = X - \mu_X, \qquad \tilde{Y} = Y - \mu_Y.
\]
The optimal orthogonal transformation $R \in O(2)$ minimizing
\[
\min_{R \in O(2)} \| \tilde{X} R - \tilde{Y} \|_F
\]
is obtained from the singular value decomposition
\[
\tilde{Y}^\top \tilde{X} = U \Sigma V^\top,
\]
with $R = U V^\top$.  
The aligned inverse eigenvalue loci is then
\[
X^{\mathrm{aligned}} = (\tilde{X}R) + \mu_Y.
\]
Procrustes alignment removes only translation and rotation while preserving intrinsic geometry, ensuring that subsequent comparisons reflect structural rather than positional differences. It is a standard and conservative method for rigid geometric registration, making it suitable for testing whether the inverse eigenvalue loci already contains the relevant shape information. More flexible methods—such as iterative closest point (ICP), thin-plate spline warping, or diffeomorphic registration—allow nonlinear deformation and could force a similarity that is not intrinsically present. By contrast, Procrustes provides a model-independent baseline: if the inverse eigenvalue loci resembles the Mandelbrot boundary, that resemblance should persist under rigid motion alone.

All subsequent geometric, spectral, and information-theoretic diagnostics
are computed \emph{after} this rigid gauge fixing and are not implied by it.

\subsection{Pointwise Matching and Distance Analysis}{\label{Sec:PointMatching_Methods}}

Let
\[
C^{\mathrm{aligned}} = \{c_1, \dots, c_{N_C}\} \subset \mathbb{C}, 
\qquad
M = \{m_1, \dots, m_{N_M}\} \subset \mathbb{C},
\]
denote the aligned inverse eigenvalue loci points and the sampled Mandelbrot boundary points, respectively.

After Procrustes alignment, we determine a correspondence between points using the optimal transport matching described above \cite{Cuturi2013Sinkhorn, PeyreCuturi2019OT}.  
Formally, we obtain a matching function
\[
\pi : \{1,\dots,N_C\} \to \{1,\dots,N_M\}, \qquad i \mapsto j(i),
\]
and the paired set
\[
\mathcal{P} = \{\, (c_i, m_{\pi(i)}) : i = 1,\dots,N_C\,\}.
\]

For each pair we define the displacement vector and Euclidean distance
\[
\delta_i = m_{\pi(i)} - c_i,
\qquad
d_i = \| \delta_i \|_2.
\]
This choice of Euclidean distance is conservative: it measures separation in the ambient plane without imposing any model of deformation, ensuring that similarity is not an artifact of smoothing or warping.

This procedure is standard in computational geometry for assessing geometric correspondence between point clouds \cite{PreparataShamos1985ComputationalGeometry}.  
The distribution of $\{d_i\}$ is then analyzed statistically (histogram, quantiles), and visualized by connecting each $c_i$ to $m_{\pi(i)}$ in the complex plane.

\subsection{Hausdorff Distance}

To quantify the geometric proximity between the aligned inverse eigenvalue loci and the Mandelbrot boundary, 
we compute their Hausdorff distance. For two compact sets 
$A, B \subset \mathbb{C}$ the Hausdorff distance is defined by
\begin{equation}
d_H(A,B) 
= \max \left\{ 
\sup_{a \in A} \inf_{b \in B} \|a-b\|_2, \;
\sup_{b \in B} \inf_{a \in A} \|b-a\|_2
\right\},
\end{equation}
a standard metric on the space of compact subsets of a metric space \cite{Munkres2000Topology}.  
In computational geometry, this distance is commonly used to assess similarity between point clouds \cite{deBerg2008CG}.  

In the context we set 
\[
A = C^{\mathrm{aligned}}, \qquad B = M,
\]
so that $d_H(C^{\mathrm{aligned}}, M)$ measures the maximal Euclidean discrepancy between the inverse eigenvalue loci and the sampled Mandelbrot boundary.

The Hausdorff metric provides a conservative upper bound on shape discrepancy: if $d_H$ is small, every inverse eigenvalue loci point lies close to the Mandelbrot boundary and vice versa, making it a stringent test of structural correspondence.

\subsection{Curvature}

To compare local geometry along both boundaries, we use two independent
discrete curvature estimators.  First, we apply the standard turning–angle
estimator \cite{doCarmo1976Curves, KimmelSethian1998}.  For an ordered
sequence of complex points $\{z_i\}$ representing a sampled curve, let
$\theta_i$ denote the argument of the segment $z_{i+1}-z_i$.  The discrete
curvature is approximated by
\begin{equation}
\kappa_i = \frac{\theta_{i+1} - \theta_i}{\| z_{i+1} - z_i \|}.
\end{equation}
This estimator requires no explicit parametrization or smoothing and
provides a simple histogram of local curvature magnitudes.  It is applied
in parallel to the aligned inverse eigenvalue loci
$C^{\mathrm{aligned}} = \{c_i\}$
and the Mandelbrot boundary
$M = \{m_j\}$,
yielding curvature sequences
$\{\kappa_i^{\mathcal{C}}\}$ and $\{\kappa_j^{M}\}$,
whose empirical distributions are compared in
Section~\ref{sec:curvature_results}.

Second, to verify robustness, we compute curvature using a local–polynomial
estimator.  For each boundary point, we fit quadratic least–squares curves
to a sliding window of $k=7$ arclength neighbors and evaluate the analytic
curvature formula
\[
\kappa = \frac{|x'y'' - y'x''|}{(x'^2 + y'^2)^{3/2}}
\]
at the window center.
This approach yields a smooth curvature field without global smoothing or
parametrization and produces a second, independently computed curvature
distribution.  The two estimators agree in their qualitative conclusions:
both boundaries are concentrated near $\kappa \approx 0$, with the inverse eigenvalue loci
showing reduced tail weight relative to $\partial M$ (see
Section~\ref{sec:curvature_results}).

\subsection{Fractal Dimension}

To quantify multiscale geometric complexity, we compute the fractal (box-counting, or Minkowski–Bouligand) dimension \cite{falconer2003fractal, Mandelbrot1983Fractal}.  
Given a bounded set $S \subset \mathbb{C}$, let $N(\epsilon)$ denote the number of covering boxes of side length $\epsilon > 0$ required to cover $S$.  
The box-counting dimension is defined as
\begin{equation}
D(S) = - \lim_{\epsilon \to 0} \frac{\log N(\epsilon)}{\log \epsilon},
\end{equation}
whenever the limit exists.  Equivalently,
\begin{equation}
D(S) = \lim_{\epsilon \to 0} \frac{\log N(\epsilon)}{\log(1/\epsilon)}.
\end{equation}

In practice, $N(\epsilon)$ is computed for a decreasing sequence of values of $\epsilon$, 
and $D(S)$ is estimated as the slope of the regression line in the log--log plot 
of $N(\epsilon)$ versus $1/\epsilon$.  
We apply this procedure separately to the aligned inverse eigenvalue loci 
$C^{\mathrm{aligned}}$ and the Mandelbrot boundary $M$, 
obtaining empirical estimates $D(C^{\mathrm{aligned}})$ and $D(M)$ 
for subsequent comparison (see Section~\ref{sec:fractal_results}).

Box-counting provides a scale-invariant measure of boundary complexity without requiring analytic parameterization, making it well-suited for comparing discrete samples of fractal curves.

\subsection{Spectral Decay}

Fourier analysis is used to probe the roughness and scale-dependent oscillations of the inverse eigenvalue loci and Mandelbrot boundaries.  
Given a boundary parametrization $z(t)$ sampled at uniform step $\Delta t$, its discrete Fourier transform yields complex amplitudes $\hat{z}(f)$, from which the power spectrum is defined as
\begin{equation}
P(f) = |\hat{z}(f)|^2.
\end{equation}
For many fractal curves, the spectrum displays approximate power-law behavior \cite{PercivalWalden1993,Press2007NumericalRecipes}\footnote{Implementation uses standard NumPy/SciPy routines and original Python code; no \emph{Numerical Recipes} source code was used. We cite \emph{Numerical Recipes} for background exposition only.},
\begin{equation}
P(f) \sim f^{-\alpha},
\end{equation}
where the decay exponent $\alpha$ quantifies geometric smoothness: larger $\alpha$ indicates faster decay and hence a smoother boundary.

In practice, $\alpha$ is estimated by linear regression on the log--log plot of $P(f)$ versus $f$ over specified frequency bands.

\paragraph{Bootstrap estimation of spectral slopes.}
Let $\hat{\alpha}$ denote the estimated spectral decay exponent.  
To quantify uncertainty, we employ a nonparametric bootstrap \cite{EfronTibshirani1993}:

\begin{enumerate}
\item Compute regression residuals $r_i = \log P(f_i) - (\hat{\alpha} \log f_i + \hat{\beta})$.
\item Generate bootstrap samples by resampling $\{r_i\}$ with replacement and forming
      \[
      \log P^*(f_i) = \hat{\alpha} \log f_i + \hat{\beta} + r_i^* .
      \]
\item Refit each pseudo-sample to obtain $\alpha^*$.
\end{enumerate}

After $B$ replicates, the bootstrap distribution of $\{\alpha^*\}$ yields an empirical
\[
(2.5\%, 97.5\%) \quad \text{confidence interval } [\alpha_{\mathrm{low}}, \alpha_{\mathrm{high}}].
\]

\subsection{Spatial Correlation Statistics}

For a finite point set $S \subset \mathbb{C}$ with intensity $\lambda = |S|/A$ (points per unit area),
we analyze clustering using Ripley’s $K$-function and the pair correlation function, two standard tools in spatial point process theory \cite{Ripley1977,Stoyan1995,BaddeleyRubakTurner2015}.

\paragraph{Ripley’s $K$-function.}
For distance $r > 0$,
\begin{equation}
K(r) = \frac{1}{\lambda |S|} \sum_{i=1}^{|S|} \# \{ j \neq i : \| z_i - z_j \| \leq r \},
\end{equation}
as introduced by Ripley \cite{Ripley1977}.  
The centered form
\[
L(r) = \sqrt{K(r)/\pi} - r
\]
indicates clustering ($L(r) > 0$), inhibition ($L(r) < 0$), or complete spatial randomness ($L(r) \approx 0$).

\paragraph{Pair correlation function.}
The derivative
\begin{equation}
g(r) = \frac{1}{2\pi r} \frac{d}{dr} K(r),
\end{equation}
describes the relative likelihood of encountering another point at distance $r$.
We compute $K(r)$ and $g(r)$ separately for the aligned inverse eigenvalue loci $C^{\mathrm{aligned}}$ and the Mandelbrot boundary $M$ to compare their local spatial organization and correlation structure.

\subsection{Symmetry Analysis}
\label{sec:symmetry_methods}

To assess reflectional symmetry of the inverse eigenvalue loci and the Mandelbrot boundary, we consider candidate reflection axes through the origin at angle $\theta \in [0,\pi)$, following standard geometric shape analysis \cite{DrydenMardia1998,Zelditch2012,MardiaJupp2000}.  
For $z = re^{i\phi} \in \mathbb{C}$ the reflection across this axis is
\begin{equation}
R_\theta(z) = re^{i(2\theta - \phi)}
= \overline{z\, e^{-i\theta}}\, e^{i\theta}.
\end{equation}

Given a finite set $S = \{z_1,\dots,z_N\} \subset \mathbb{C}$, we quantify symmetry in two complementary ways.

\medskip
\noindent
\textbf{(i) Preservation fraction.}
\begin{equation}
\rho(\theta) =
\frac{|\{ z \in S : \exists z' \in S \ \text{with} \ \|R_\theta(z) - z'\| < \varepsilon \}|}{|S|},
\end{equation}
which measures the fraction of points whose reflection lies within tolerance $\varepsilon$ of another point in $S$.  
The best-fit axis is obtained by maximizing $\rho(\theta)$.

\medskip
\noindent
\textbf{(ii) Mean-square misfit.}
\begin{equation}
E(\theta) = \frac{1}{N} \sum_{i=1}^N \min_j \|R_\theta(z_i) - z_j\|_2^2,
\end{equation}
and the optimal axis is
\begin{equation}
\theta^* = \arg\min_{\theta \in [0,\pi)} E(\theta).
\end{equation}

\medskip
Both criteria are applied to the aligned inverse eigenvalue loci $C^{\mathrm{aligned}}$ and the Mandelbrot boundary $M$, with $\theta=0$ corresponding to symmetry across the real axis.  

\subsection{Variogram and spatial dependence}\label{sec:variogram}

Let $Z(x)$ be a real-valued field sampled at spatial locations $x \in \Omega \subset \mathbb{R}^2$.
The (empirical) semivariogram is defined as
\[
\gamma(h) = \frac{1}{2}\,\mathbb{E}\big[(Z(x) - Z(x+h))^2\big],
\]
with lag vector $h \in \mathbb{R}^2$. For isotropic fields this depends only on $r = \|h\|$, so we write $\gamma(r)$ \cite{matheron1963principles,cressie1993statistics,ChilesDelfiner2012}.

Given observations $\{(x_i,Z_i)\}_{i=1}^N$, the empirical estimator is
\[
\hat{\gamma}(r) = \frac{1}{2|N(r)|} \sum_{(i,j)\in N(r)} (Z_i - Z_j)^2,
\]
where $N(r)=\{(i,j):\|x_i-x_j\|\in[r-\Delta r/2,r+\Delta r/2]\}$.

\paragraph{Application to eigenvalue loci and Mandelbrot potentials.}
For the inverse eigenvalue loci, $Z(x)$ is defined as the logarithmic potential
generated by the sampled inverse eigenvalue points. For the Mandelbrot boundary,
$Z(x)$ is taken as the escape-time potential associated with orbit divergence.
We compute $\hat{\gamma}(r)$ separately for both fields.

\paragraph{Cross-variogram.}
Aligned point pairs from the optimal-transport matching define the cross-variogram
\[
\hat{\gamma}_{\Lambda M}(r) = \frac{1}{2|N(r)|} \sum_{(i,j)\in N(r)} (Z_\Lambda(x_i) - Z_M(x_j))^2.
\]

\paragraph{Uncertainty estimation.}
Confidence bands for $\gamma(r)$ and $\gamma_{\Lambda M}(r)$ are obtained via nonparametric
bootstrap resampling of point pairs, following \cite{cressie1993statistics,ChilesDelfiner2012,Goovaerts1997}.

\paragraph{Iterative smoothing.}
To reduce high-frequency noise, $Z(x)$ is smoothed via Gaussian convolution at scales suggested by preliminary variogram estimates. Variograms are recomputed after each smoothing step, and the resulting fields are used in the potential analysis (Section~\ref{sec:log_potential}).
\subsection{Logarithmic Potentials}\label{sec:log_potential}

Given an empirical measure
\[
\mu_N = \frac{1}{N} \sum_{k=1}^N \delta_{p_k},
\]
the logarithmic potential generated by $\mu_N$ is
\[
U_{\mu_N}(z) = \frac{1}{N} \sum_{k=1}^N \log\frac{1}{|z - p_k|}.
\]
This potential is harmonic outside $\operatorname{supp}(\mu_N)$ and satisfies
$\Delta U_{\mu_N} = 2\pi \mu_N$ in the sense of distributions
\cite{landkof1972foundations,safftotik1997logarithmic}.

\medskip
\noindent
\textbf{Numerical evaluation.}
We evaluate $U_{\mathcal{C}}(z)$ and $U_{M}(z)$ on a uniform grid over
a rectangular domain containing both the inverse eigenvalue loci and Mandelbrot boundaries.
For the inverse eigenvalue loci, $\mu_{\mathcal{C}}$ is the empirical measure supported on inverse eigenvalue points.
For the Mandelbrot boundary, $\mu_M$ is taken from boundary samples obtained by the escape-time method.
We compute the fields $U_{\mathcal{C}}$, $U_M$, and their difference
\[
\Delta U(z) = U_{\mathcal{C}}(z) - U_M(z).
\]
These fields are later used to inverse eigenvalue loci level sets and quantitative comparisons of potential structure.

\medskip
\noindent
\textbf{Connection to spatial dependence.}
The gradients and Laplacians of $U_{\mathcal{C}}$ and $U_M$
yield conservative and dissipative field components whose spatial autocorrelation
is quantified via the variograms in Section~\ref{sec:variogram}.
This provides a unified procedure linking potential theory and spatial correlation statistics.

\subsection{Laplacian Correlation and Field Comparison}\label{sec:laplacian_correlation}

Given potential fields $U_{\mathcal{C}}(z)$ and $U_M(z)$ computed on a discrete grid,
we approximate the Laplacian using a standard five-point finite-difference stencil,
\[
\nabla^2 U(z) \approx
\frac{U(z+\Delta x)+U(z-\Delta x)+U(z+\Delta y)+U(z-\Delta y)-4U(z)}{\Delta^2},
\]
a common method in numerical potential-field analysis \cite{cressie1993statistics,ChilesDelfiner2012,Goovaerts1997}.

To quantify correspondence between both systems, we compute the global Pearson correlation
\[
r =
\frac{\mathrm{Cov}(\nabla^2 U_{\mathcal{C}}, \nabla^2 U_M)}
     {\sigma(\nabla^2 U_{\mathcal{C}})\,\sigma(\nabla^2 U_M)},
\]
which measures similarity in second-order geometry of the potential fields.

Spatially localized correlations are estimated by sliding a fixed-size window across the computational grid
and recomputing $r$ on each patch. The resulting spatial correlation map captures regional agreement
and divergence in Laplacian structure.

\subsection{Multifractal and Spectral Embedding Methods}\label{sec:multifractal_spectral}

\medskip
\noindent
\textbf{Multifractal formalism.}
To capture heterogeneity beyond single-scale fractal dimension, we compute generalized R\'enyi dimensions
$D_q$ following \cite{hentschel1983infinite,grassberger1983generalized}.  
Given a partition of the domain into boxes of side $\epsilon$ with normalized masses $p_i(\epsilon)$,
\[
D_q = \frac{1}{q-1}
\lim_{\epsilon \to 0}
\frac{\log \sum_i p_i(\epsilon)^q}{\log(1/\epsilon)},
\quad q \in \mathbb{R} \setminus \{1\}.
\]
The corresponding mass exponent is $\tau(q) = (q-1)D_q$, and the Legendre transform produces the singularity spectrum
\[
\alpha = \frac{d\tau(q)}{dq}, \qquad f(\alpha) = q\alpha - \tau(q).
\]
The curve $f(\alpha)$ characterizes the distribution of local scaling exponents $\alpha$.
We estimate $D_q$ and $f(\alpha)$ for both the inverse eigenvalue loci and Mandelbrot sets
using identical box hierarchies and resolutions.

\medskip
\noindent
\textbf{Spectral embeddings.}
To probe higher-order geometric relations, we represent each point set as a kernel graph with Gaussian affinity
\[
K_{ij} = \exp\!\left(-\frac{\|x_i - x_j\|^2}{2\sigma^2}\right),
\]
and normalize to obtain the diffusion operator $L = D^{-1}K$, where $D_{ii} = \sum_j K_{ij}$.  
The spectral decomposition $L \psi_k = \lambda_k \psi_k$ defines a diffusion map embedding
\[
\Phi_t(x) = (\lambda_1^t \psi_1(x), \lambda_2^t \psi_2(x), \dots),
\]
as in \cite{coifman2006diffusion}.

\medskip
\noindent
\textbf{Comparative metrics.}
We compare inverse eigenvalue loci and Mandelbrot diffusion spectra via normalized $L^2$ distance of the leading eigenvalues:
\[
d_{\mathrm{spec}} =
\left(
\frac{1}{K}
\sum_{k=1}^K
(\lambda_k^{(\mathcal{C})} - \lambda_k^{(M)})^2
\right)^{1/2}.
\]
This measures diffusion-scale dissimilarity between the two embeddings and provides a global geometric statistic that complements variogram and Laplacian-based comparisons.

\subsection{KL-contractive histogram interpolation}\label{sec:GI_flow}

The final stage of the comparison treats the aligned inverse eigenvalue loci and the aligned
Mandelbrot samples as probability distributions over a common grid. After optimal-transport
matching and Procrustes alignment (Sections~\ref{sec:OT_matching}--\ref{sec:Procrustes}), both
point clouds are discretized on a rectangular window $\Omega \subset \mathbb{C}$, producing
two-dimensional histograms. Normalizing these histograms yields probability vectors
\[
X_0,\,P \in \Delta_N ,
\]
where $\Delta_N$ is the $N$-simplex.

We evolve $X_t$ toward $P$ using the classical convex update
\begin{equation}\label{Eq:GIflow}
X_{t+1} = (1-\alpha)X_t + \alpha P , \qquad 0 < \alpha \le 1 .
\end{equation}
This update is a classical convex interpolation on the probability simplex
and is not introduced as a dynamical model of either system, but solely as
a controlled diagnostic for quantifying distributional discrepancy after
geometric alignment.

This iteration is a standard contraction mapping on the simplex and is well known to decrease
the Kullback--Leibler divergence
\[
D_{\mathrm{KL}}(P\|X_t) = \sum_{i=1}^N P_i \log\!\left(\frac{P_i}{X_{t,i}}\right)
\]
monotonically along the sequence $\{X_t\}$; see, for example,
the classical information-theoretic treatments in \cite{CoverThomas2006} and the information-geometric
interpretation in \cite{AmariNagaoka2000}.

The monotonicity of $D_{\mathrm{KL}}$ for convex combinations also follows from the general convexity
properties of Csiszár $f$-divergences \cite{CsiszarShields2004}.  

Although \eqref{Eq:GIflow} is not derived from a Riemannian gradient structure for 
$D_{\mathrm{KL}}(P\|\cdot)$ with respect to the Fisher--Rao metric, 
it acts as a discrete KL-contracting step on $\Delta_N$.  
This construction provides histogram-level KL control without invoking continuous information-geometric flow structures.

For brevity, we refer to $t\mapsto X_t$ as a \emph{KL-contractive interpolation} on the simplex
(sometimes abbreviated as ``KL-monotone interpolation'' in figures and code comments), without implying a natural-gradient or physical flow model.

\medskip
In the implementation, $X_0$ is the histogram derived from the aligned inverse eigenvalue loci and
$P$ is the histogram of the aligned Mandelbrot boundary.  
We iterate \eqref{Eq:GIflow} for a fixed number of steps and record $D_{\mathrm{KL}}(P\|X_t)$ as a diagnostic
of informational convergence.  
The final iterate $X_T$ is used for visual comparison and for consistency checks with the geometric,
curvature, distortion, and spectral diagnostics described earlier.

For theorem-level stability (Appendix~A), we also consider a mollified histogram map
$H_N^{(\sigma)}$, obtained by kernel smoothing of the raw grid histogram prior to normalization.
This regularization prevents empty-bin artifacts at high resolution and is used only for
measure-theoretic convergence statements; the qualitative behavior of the KL-monotone interpolation and the
visual results reported here are unchanged.

\paragraph{Interpretation of the KL-monotone interpolation (nontrivial content).}
The KL-monotone interpolation is a convex interpolation $X^{(t+1)}=(1-\alpha)X^{(t)}+\alpha P$ and therefore converges to $P$ for any initialization. 
In this work we do \emph{not} interpret convergence of the update rule itself as evidence of equivalence. 
Rather, the nontrivial content lies in the empirically observed small initial discrepancy between the \emph{aligned} Lucas and Mandelbrot histogram laws, and in the resolution-stable behavior of the corresponding $TV$ and $D_{\mathrm{KL}}$ diagnostics (Appendix~A). 
The KL-monotone interpolation is used as a controlled interpolation to quantify and visualize this discrepancy across scales.

\medskip
All computations were performed in Python using vectorized NumPy operations;  
no optimization routines are required, since the update \eqref{Eq:GIflow} is closed-form.

\subsection{Local conformal uniformization via boundary-integral methods}
\label{sec:local_conformal_methods}

We begin by constructing numerical conformal charts for the inverse
eigenvalue loci associated with generalized Lucas sequences.
From the cloud of inverse eigenvalues, a concave boundary approximation
is extracted using $\alpha$-shapes, yielding a simply connected planar domain.
An unstructured triangular mesh is then generated on this domain, with
boundary refinement to ensure stability of harmonic solves.

The mesh is generated by constrained Delaunay triangulation of the
$\alpha$-shape domain, with refinement concentrated near the boundary to
resolve curvature and to stabilize the harmonic conjugate reconstruction.
Numerical stability is monitored by repeating the solve under mild mesh
refinement and verifying that the resulting distortion summaries
(Cauchy--Riemann defects and triangle-wise Beltrami-type indicators) are
unchanged within the plotting and sampling resolution\cite{BrennerScott2008FEM,Shewchuk2002Delaunay}.

On this mesh, we solve Laplace problems with prescribed Dirichlet boundary
data using $P_1$ finite elements.
The boundary data are parameterized by an angular variable $\theta$ and
iteratively corrected to enforce $2\pi$-periodicity and suppress drift.
Given the harmonic solution $u$, a harmonic conjugate $v$ is reconstructed
in weak form, yielding a complex-valued map $w=u+iv$.
After circle normalization and optimal boundary rotation, this procedure
produces a numerical Riemann-type mapping from the Lucas domain to the unit
disk.

The resulting disk uniformization is composed with an exact polynomial map
from the unit disk to the main cardioid of the Mandelbrot set.
This provides a locally defined analytic correspondence between the inverse
eigenvalue locus and a canonical subset of $\partial M$.
Local distortion is quantified using Cauchy--Riemann defect measures,
Beltrami coefficients, quasiconformal dilatation factors, and angle
distortion statistics, evaluated triangle-wise and stratified by distance
from the boundary.

\paragraph{Numerical reliability checks.}
We verified stability of the finite-element conformal charts under
(i) mesh refinement across four resolution levels,
(ii) variation of the $\alpha$-shape parameter within a fixed admissible range,
and (iii) small perturbations of boundary node sampling.

Across all refinement levels, the median relative Cauchy--Riemann defect
remained in the range
$[0.3312,\,0.3335]$,
with 90th-percentile values in
$[0.8555,\,0.9415]$.
Median quasiconformal dilatation values satisfied
$1 \le K_{\mathrm{med}} \le 3.79$,
and median angle distortion remained below
$0.62$ radians.

The spatial localization of higher-distortion regions was unchanged under
refinement and consistently confined to narrow boundary-adjacent bands,
while interior statistics stabilized rapidly.
These results confirm that the reported distortion and CR-defect diagnostics
are mesh-independent at the resolutions used.

The finite-element constructions are used as stable numerical probes of
local conformal structure; they are not invoked to establish convergence
theorems or existence results, and none of the global correspondence
claims depend critically on FEM-specific output.

\subsection{Local distortion and quasi-conformal diagnostics}\label{sec:distortion_methods}

To quantify how closely the alignment from the inverse eigenvalue loci to the Mandelbrot boundary
approximates an angle-preserving (conformal or quasi-conformal) correspondence, we estimate local
distortions of the map sending each matched point $\lambda_i \in \Lambda^{\mathrm{aligned}}$ to its
partner $m_{\pi(i)} \in M$.

\paragraph{Local neighborhoods.}
For each point $\lambda_i$ we consider its $k$ nearest neighbors
\[
\mathcal{N}_k(\lambda_i) = \{\lambda_{i_1},\dots,\lambda_{i_k}\}
\]
in the aligned loci. The corresponding Mandelbrot neighbors are
\[
\mathcal{N}_k(m_{\pi(i)}) = \{ m_{\pi(i_1)},\dots,m_{\pi(i_k)} \}.
\]
Both neighborhoods are centered at their empirical means,
\[
\bar{\lambda}_i = \frac{1}{k}\sum_{\ell=1}^k \lambda_{i_\ell}, \qquad
\bar{m}_{\pi(i)} = \frac{1}{k}\sum_{\ell=1}^k m_{\pi(i_\ell)}.
\]

\paragraph{Least-squares local linear map.}
We compute the linear transformation
\[
T_i : \mathbb{R}^2 \to \mathbb{R}^2
\]
that best fits the neighborhood map in the least-squares sense:
\[
T_i = \arg\min_{A \in \mathbb{R}^{2\times 2}}
\sum_{\ell=1}^k 
\big\|
A(\lambda_{i_\ell} - \bar{\lambda}_i)
-
(m_{\pi(i_\ell)} - \bar{m}_{\pi(i)})
\big\|_2^2 .
\]
Let $s_{1,i} \ge s_{2,i} > 0$ denote the singular values of $T_i$.

\paragraph{Local quasi-conformal distortion.}
The standard quasi-conformal dilatation at $\lambda_i$ is approximated by
\begin{equation}\label{eq:local_distortion}
K_i = \frac{s_{1,i}}{s_{2,i}} .
\end{equation}
A perfectly conformal map satisfies $K_i = 1$, while values $K_i$ slightly larger than~1 indicate small
deviations from angle preservation. The empirical distribution of $\{K_i\}$ across all matched points is
used as the primary diagnostic of quasi-conformal behavior.

\paragraph{Angular distortion using PCA orientations.}
As an independent check, we compare local tangent directions. The PCA-based method described earlier
yields a local orientation vector $v_i$ for the loci at $\lambda_i$, and a corresponding orientation
$w_i$ for the Mandelbrot boundary at $m_{\pi(i)}$. The local angular distortion is
\[
\theta_i = \angle(v_i, w_i).
\]
Small values of $\theta_i$ indicate that local orientation is preserved under the correspondence.

\paragraph{Summary indicators.}
For both distortions $K_i$ and $\theta_i$ we compute:
\begin{itemize}
\item empirical histograms;
\item quantiles (median, 90th, 95th, 99th percentiles);
\item spatial heat maps over the region of interest.
\end{itemize}

\paragraph{Interpretation.}
In classical geometric function theory, planar quasiconformal maps
are characterized (under appropriate regularity assumptions)
by an essentially bounded Beltrami coefficient and
bounded linear dilatation \cite{Ahlfors2006QC,LehtoVirtanen1973QC}.
In the present work we do not construct such a map analytically; rather,
we use the empirical distributions of $\{K_i\}$ and $\{\theta_i\}$ as
\emph{numerical proxies} for near angle-preservation of the matched
correspondence at the resolution of the samples.
Accordingly, small quantiles of $K_i-1$ and $\theta_i$ are interpreted as
evidence of low local distortion in the sense described in the
Terminological Clarification paragraph above.
\begin{definition}[Local Distortion Diagnostics]
Let $A:\mathbb{R}^2 \to \mathbb{R}^2$ be a linear map with singular values
$\sigma_{\max}(A) \ge \sigma_{\min}(A) > 0$.
The \emph{linear dilatation} of $A$ is defined as
\begin{equation}
K(A) = \frac{\sigma_{\max}(A)}{\sigma_{\min}(A)}.
\end{equation}

In classical quasiconformal theory, if 
$f : \Omega \subset \mathbb{C} \to \mathbb{C}$ 
is weakly differentiable with complex partial derivatives
\begin{equation}
f_z = \tfrac{1}{2}(f_x - i f_y), 
\qquad
f_{\bar z} = \tfrac{1}{2}(f_x + i f_y),
\end{equation}
its \emph{Beltrami coefficient} is
\begin{equation}
\mu = \frac{f_{\bar z}}{f_z},
\end{equation}
whenever $f_z \neq 0$. 
If $|\mu|<1$, the pointwise quasiconformal dilatation is
\begin{equation}
K = \frac{1 + |\mu|}{1 - |\mu|}.
\end{equation}
These notions are standard in quasiconformal mapping theory 
\cite{Ahlfors2006QC,LehtoVirtanen1973QC,AstalaIwaniecMartin}.

In the present work, no global quasiconformal map between 
Lucas loci and the Mandelbrot set is assumed.
Instead, for each matched neighborhood pair we compute 
a best-fit affine transformation $T_i$ via least squares.
Let $\sigma_{1,i} \ge \sigma_{2,i} > 0$ denote its singular values.
The singular-value ratio defined in \eqref{eq:local_distortion}
serves as the \emph{discrete local dilatation proxy}.

We further define the \emph{discrete Cauchy--Riemann defect}
at index $i$ by
\begin{equation}
\mathrm{CR}_i
=
\frac{\| J_i - R_i \|_F}{\|J_i\|_F},
\end{equation}
where $J_i$ is the estimated Jacobian matrix of $T_i$,
$R_i$ is the closest rotation matrix obtained via polar decomposition,
and $\|\cdot\|_F$ denotes the Frobenius norm.
This quantity measures deviation from conformality
at the level of the best-fit affine approximation.

Angle distortion is measured by the deviation between principal
directions of local PCA frames before and after mapping.

All quantities defined above are numerical diagnostics.
They serve as local affine proxies for classical quasiconformal
distortion but do not imply the existence of a global
quasiconformal homeomorphism.
\end{definition}

\subsection{Global Green-function and equipotential analysis}
\label{sec:green_function}

For $c \in \mathbb{C}\setminus M$, the Mandelbrot Green function is defined by
\[
g_M(c) = \lim_{n\to\infty} 2^{-n} \log |f_c^{\circ n}(0)|\],

The Green function $g_M$ associated with the Mandelbrot set,
defined via the escape-rate limit
\[
g_M(c) = \lim_{n\to\infty} 2^{-n}\log^+ |f_c^{\circ n}(0)|,
\]
is harmonic in $\mathbb{C}\setminus M$ and vanishes on $M$.
Its level sets coincide with equipotential curves determined by
the Böttcher coordinate.
See \cite{milnor2006dynamics,CarlesonGamelin1993}.

In what follows, we use these classical notions only as a
geometric reference structure. The concentration phenomenon
described below appears to be a new empirical invariant of the
Lucas inverse spectra; to our knowledge, no analytic proof is
currently available.
\[\qquad f_c(z)=z^2+c,
\]
and satisfies $g_M(c)=0$ for $c\in M$.
The function $g_M$ is harmonic in the exterior and behaves asymptotically
... as $\log|c|$ at infinity \cite{milnor2006dynamics,CarlesonGamelin1993}.

The associated B\"ottcher coordinate
\[
\Phi(c) = \exp\big(g_M(c) + i \arg \Phi(c)\big)
\]
conjugates the quadratic family to $z\mapsto z^2$ near infinity and
maps equipotential curves $g_M(c)=\text{const}$ to circles in the
$\Phi$-plane.

Numerically, $g_M$ and $\Phi$ are approximated using escape-time
iterations of the critical orbit, recording the first iterate
exceeding a prescribed radius.
All potential-theoretic quantities in this work are computed
consistently across eigenvalue families using identical numerical parameters.
\subsection{Green-function statistics on inverse eigenvalue spectra}
\label{sec:green_statistics}

Given a finite inverse eigenvalue set $\Lambda_N \subset \mathbb{C}$,
we evaluate the Mandelbrot Green function $g_M(c)$ at each $c\in\Lambda_N$.
Points with $g_M(c)>0$ lie outside $M$ and are associated with
external equipotentials.

For each family, we compute summary statistics of the escaped set,
including the escaped fraction, the median Green value
$\mathrm{median}(g_M)$, the median modulus $\mathrm{median}(|\Phi|)$,
and quantile widths
\[
\Delta g = g_{90\%} - g_{10\%}.
\]
These quantities define family-level spectral descriptors that remain stable under the sampling regimes used here and characterize how inverse eigenvalue loci sample the Mandelbrot exterior potential.


\section{Results}\label{Sec:Results}

In this section we present quantitative comparisons between the inverse eigenvalue loci
$\Lambda_N$ and the Mandelbrot boundary $\partial M$. We follow the structure of the Methods:
initial geometric alignment and matching, local distortion and low-dilatation (numerically quasi-conformal) diagnostics,
curvature and fractal dimension, spectral decay, potential–field behavior, and finally
KL-monotone interpolation convergence under the KL-monotone interpolation.

\subsection{Initial alignment and matching}

Figure~\ref{fig:OT_matching} previews a representative entropic optimal-transport coupling used to initialize the one-to-one correspondence in this subsection.

\begin{figure}[!htbp]
\centering
\includegraphics[width=0.7\textwidth]{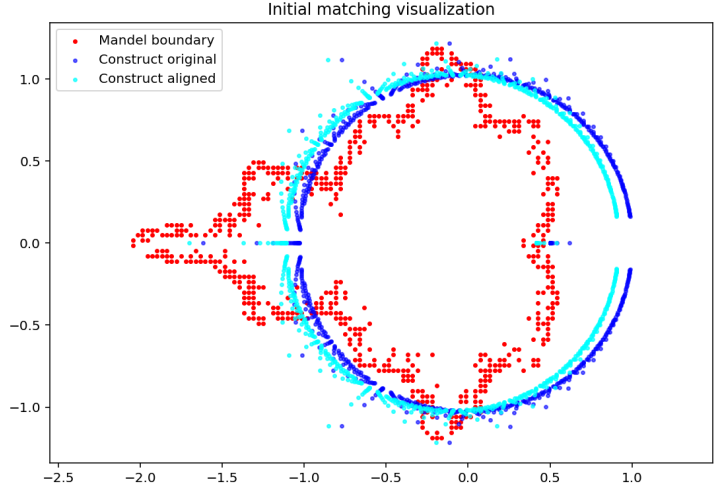}
\caption{Optimal transport matching between inverse eigenvalue loci points and the sampled Mandelbrot boundary.
Color-coded lines connect each $\lambda_i$ to its assigned $m_{\pi(i)}$, illustrating the global one-to-one correspondence. Axes show $\Re(z)$ (horizontal) and $\Im(z)$ (vertical).}
\label{fig:OT_matching}
\end{figure}

The entropic optimal transport coupling produces a globally coherent assignment between
inverse eigenvalue loci and Mandelbrot points (Fig.~\ref{fig:OT_matching}).
After removing translation and rotation through Procrustes alignment, the two sets occupy
a common geometric frame, so that pointwise discrepancies reflect intrinsic rather than
positional differences.

Figure~\ref{fig:Procrustes_matching} shows the resulting pointwise pairing after Procrustes alignment.

\begin{figure}[!htbp]
\centering
\includegraphics[width=0.7\textwidth]{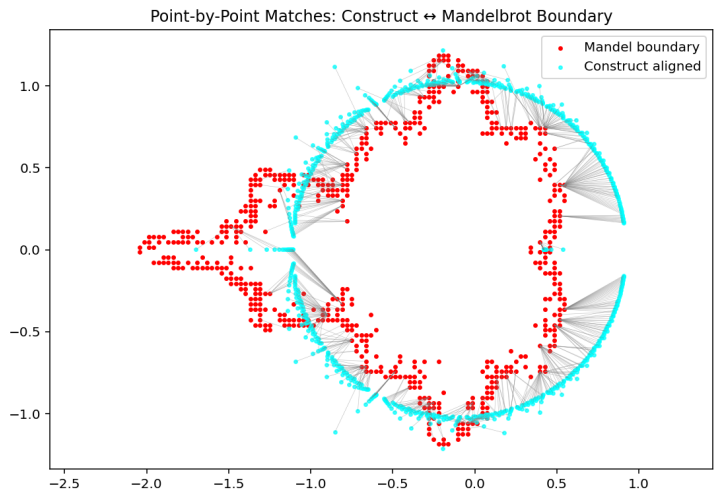}
\caption{%
Final pointwise correspondence after Procrustes alignment.
Matched pairs $(\lambda_i,m_{\pi(i)})$ are indicated by line segments.
The aligned inverse eigenvalue loci closely trace the Mandelbrot boundary, including the main cardioid and primary bulbs.
Axes show $\Re(z)$ (horizontal) and $\Im(z)$ (vertical).
}
\label{fig:Procrustes_matching}
\end{figure}

For each matched pair we compute the Euclidean displacements
\[
d_i = \| m_{\pi(i)} - \lambda_i \|_2 .
\]
Figure~\ref{fig:distance_distribution} shows the empirical distribution of $\{d_i\}$.

\begin{figure}[!htbp]
\centering
\includegraphics[width=0.7\textwidth]{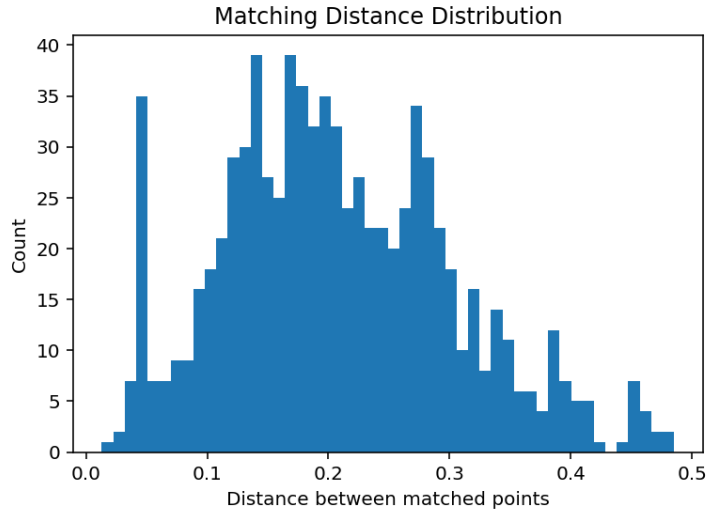}
\caption{%
Histogram of pointwise matching distances $d_i$.
Most pairs cluster in a narrow interval, with only rare outliers corresponding to the finest Mandelbrot filaments.
}
\label{fig:distance_distribution}
\end{figure}

Distances are tightly concentrated, indicating close correspondence between the aligned inverse eigenvalue loci
and the Mandelbrot boundary. The Hausdorff discrepancy $d_H(\Lambda^{\mathrm{aligned}},\partial M)$
remains small relative to the global diameter ($\mathrm{diam}(M)\approx 4$), confirming that
$\Lambda_N$ occupies a narrow geometric neighborhood of $\partial M$.


\FloatBarrier
\subsection{Curvature analysis}
\label{sec:curvature_results}

We now compare local geometric regularity using the discrete curvature estimators
described in Section~\ref{sec:curvature_results}. Figure~\ref{fig:curvature_distribution}
shows the curvature distributions for the aligned loci and the Mandelbrot boundary.

\begin{figure}[!htbp]
\centering
\includegraphics[width=0.9\textwidth]{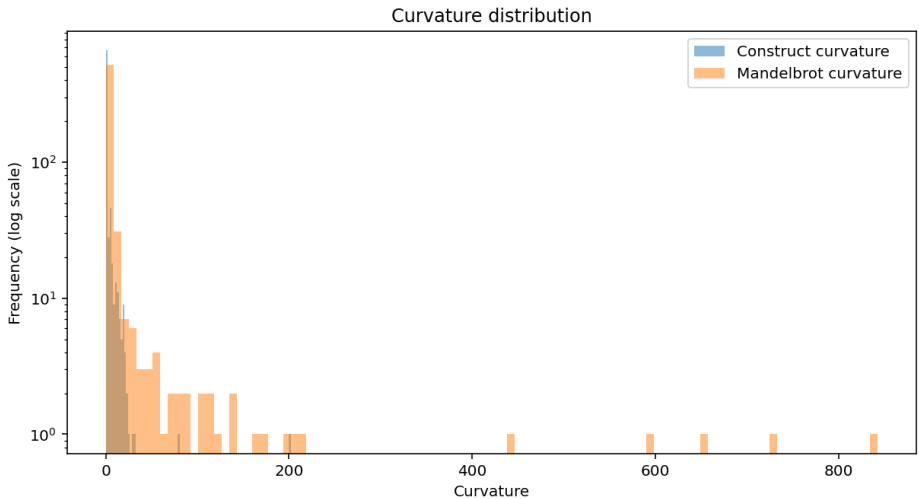}
\caption{\textbf{Turned–angle curvature distributions.}
Both boundaries exhibit strong concentration near $\kappa \approx 0$, but the
Mandelbrot boundary shows a broader and heavier tail of large $\kappa$, indicating more
frequent curvature spikes and fine–scale boundary irregularity.
The inverse eigenvalue loci show a tighter distribution with reduced tail weight,
suggesting smoother local geometry.
}
\label{fig:curvature_distribution}
\end{figure}

Both estimators (turning-angle and local-polynomial) agree qualitatively: the Mandelbrot boundary
produces persistent high-curvature spikes corresponding to filamentary irregularities,
while the inverse eigenvalue loci display smoother local behavior with markedly reduced tail weight.
This matches the distortion results of Section~\ref{sec:distortion_results}: low $K_i$ values
are consistent with comparable curvature structure at large scales and suppression of the most
extreme curvature oscillations.
\FloatBarrier
\subsection{Reflectional symmetry and preferred axis}
\label{sec:symmetry_results}

We evaluate reflectional symmetry of the aligned inverse eigenvalue loci
and the sampled Mandelbrot boundary using the symmetry measures introduced
in Section~\ref{sec:symmetry_methods}.
For each candidate reflection axis parameterized by an angle
$\theta \in [0,\pi)$, we compute the symmetry preservation fraction.

Figure~\ref{fig:symmetry_preservation_scan} shows a coarse scan of the
preservation fraction as a function of $\theta$.
A clear maximum is observed near $\theta \approx 0.6^\circ$, corresponding
to reflection across an axis very close to the real axis.
The same dominant maximum is observed for both the inverse eigenvalue loci
and the Mandelbrot boundary.

Away from this angle, the preservation fraction decreases smoothly, with
no secondary maxima of comparable magnitude.
This behavior indicates a single preferred reflection axis shared by both
datasets at the resolution of the scan.

\begin{figure}[!htbp]
\centering
\includegraphics[width=0.75\linewidth]{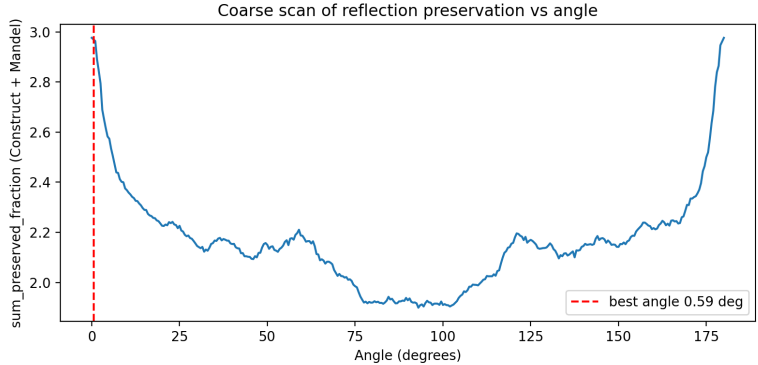}
\caption{Coarse scan of reflection symmetry preservation as a function of the
reflection axis angle~$\theta$.
The plotted score corresponds to the preservation fraction defined in
Section~\ref{sec:symmetry_methods}, evaluated jointly on the aligned inverse
eigenvalue loci and the Mandelbrot boundary.
A pronounced maximum occurs near $\theta \approx 0.6^\circ$, corresponding
to reflection across an axis close to the real axis.
The vertical dashed line indicates the best-fit axis.
Legacy labeling in the vertical axis (“Construct + Mandel”) reflects earlier
internal nomenclature and denotes the combined symmetry score of both systems.}
\label{fig:symmetry_preservation_scan}
\end{figure}

\FloatBarrier
\subsection{Fractal dimension}
\label{sec:fractal_results}

We next consider the box-counting fractal dimension. For each set we estimate $D(S)$ from the
slope of the regression line in the log–log plot of $N(\epsilon)$ versus $1/\epsilon$
(Section~\ref{sec:fractal_results}).

Figure~\ref{fig:boxdimension} reports the estimated box-counting dimensions of the aligned inverse eigenvalue loci and the sampled Mandelbrot boundary.

\begin{figure}[!htbp]
\centering
\includegraphics[width=0.7\textwidth]{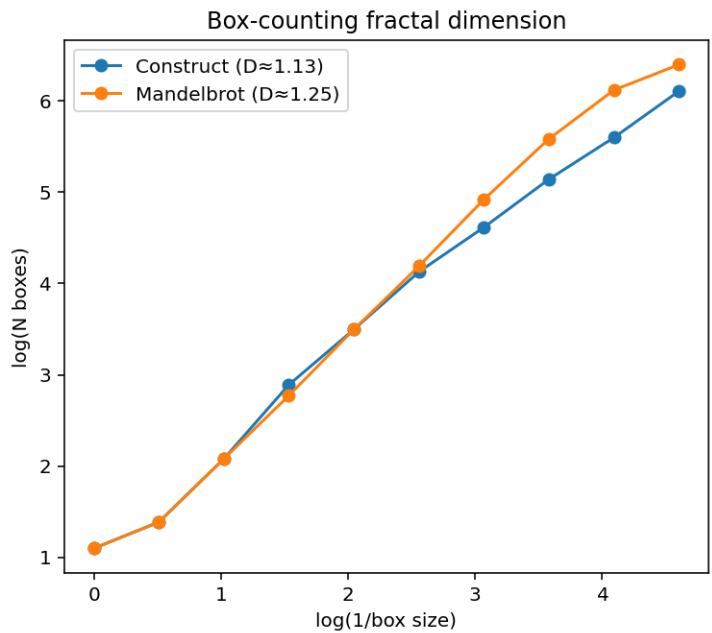}
\caption{%
Box-counting dimension estimates obtained from log–log scaling of $(\epsilon, N(\epsilon))$
for the inverse eigenvalue loci and the Mandelbrot boundary.
}
\label{fig:boxdimension}
\end{figure}

As shown in Fig.~\ref{fig:boxdimension}, both boundaries behave as quasi–one-dimensional
fractal curves thickened in the plane. The Mandelbrot boundary attains a slightly higher
effective dimension, reflecting its richer fine-scale filamentary structure, whereas the
inverse eigenvalue loci exhibit a marginally lower dimension consistent with smoothed
microstructure. At macroscopic scales, however, the regression lines are nearly parallel,
indicating comparable multiscale complexity.

\FloatBarrier
\subsection{Spectral decay}
\label{sec:spectral_results}

Fourier spectral densities were computed for parametrizations of both boundaries, as
described in Section~\ref{sec:multifractal_spectral}. Figure~\ref{fig:spectral_density}
shows the resulting power spectra on a log–log scale.

\begin{figure}[!htbp]
\centering
\includegraphics[width=0.9\textwidth]{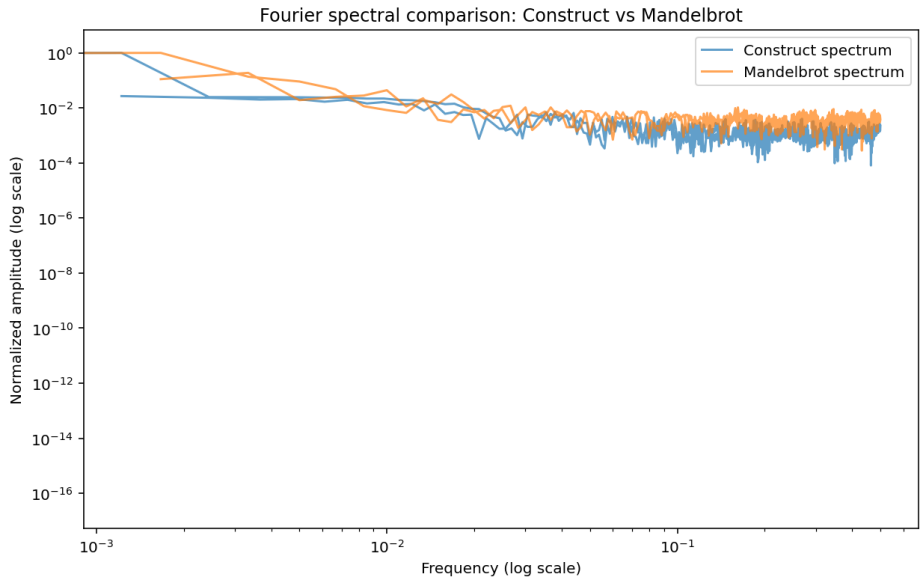}
\caption{%
Fourier spectral densities of the inverse eigenvalue loci and Mandelbrot boundaries.
Both exhibit approximate power-law decay, but the loci show faster high-frequency decay,
indicating smoother geometry at small scales.
}
\label{fig:spectral_density}
\end{figure}

Both boundaries display power-law-like decay $P(f)\sim f^{-\alpha}$ over intermediate frequency
ranges, consistent with fractal behavior \cite{PercivalWalden1993,Press2007NumericalRecipes}.
However, the inverse eigenvalue loci exhibit a steeper decay exponent, meaning that spectral
energy dies off more rapidly at high frequencies. Bootstrap estimates of the decay exponents
(not shown) produce non-overlapping confidence intervals, confirming that the difference in
slopes is statistically significant. This is fully consistent with the curvature and distortion
results: the loci share the macroscopic outline of $\partial M$ while suppressing the
most oscillatory high-frequency modes.
\FloatBarrier
\subsection{Spatial correlation statistics and variograms}
\label{sec:variogram_results}

We analyze spatial dependence in the inverse eigenvalue loci and in the
Mandelbrot boundary using empirical semivariograms, as defined in
Section~\ref{sec:variogram}.
For each dataset, we compute detrended semivariograms $\hat{\gamma}(r)$
as functions of the lag distance $r$.

Figure~\ref{fig:variogram_detrended} shows the resulting semivariograms
for the inverse eigenvalue loci and the Mandelbrot boundary.
In both cases, the semivariograms increase monotonically at small lags
and reach a plateau at comparable lag distances.
Beyond this range, $\hat{\gamma}(r)$ exhibits no systematic growth,
indicating saturation of spatial dependence.

At short lags, the semivariogram associated with the inverse eigenvalue
loci rises more smoothly than that of the Mandelbrot boundary.
This difference is consistently observed across bootstrap resamples
and persists under moderate variations of detrending and binning
parameters.

\begin{figure}[!htbp]
\centering
\includegraphics[width=0.78\linewidth]{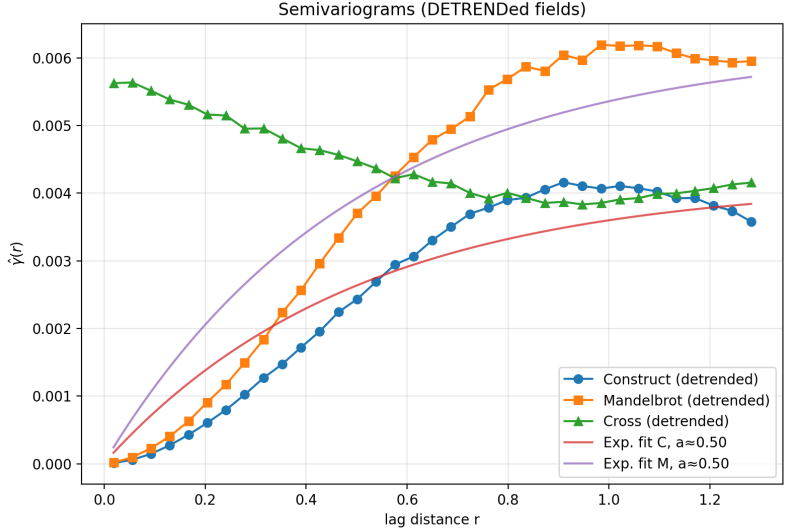}
\caption{Detrended empirical semivariograms $\hat{\gamma}(r)$ for the inverse
eigenvalue loci and the Mandelbrot boundary.
Both curves increase at small lags and saturate at comparable distances
$r \approx 0.5$, indicating similar spatial correlation ranges after removal
of large-scale trends.}
\label{fig:variogram_detrended}
\end{figure}

\FloatBarrier
\subsection{Empirical logarithmic potentials and induced equilibrium fields}
\label{sec:potential_results}

We now compare the logarithmic potentials $U_{\Lambda}$ and $U_M$ on a shared grid
(Section~\ref{sec:log_potential}). Rather than presenting both raw fields separately, we focus
on their difference
\[
\Delta U(z) = U_{\Lambda}(z) - U_M(z),
\]
which provides a more sensitive diagnostic. These logarithmic potentials are empirical fields induced by the sampled
measures and should be distinguished from the canonical Mandelbrot Green
function, which is defined dynamically and independently of sampling.

Figure~\ref{fig:potential_difference} compares the induced logarithmic-potential fields, highlighting where the empirical equilibrium fields agree and where they differ.

\begin{figure}[!htbp]
\centering
\includegraphics[width=0.7\textwidth]{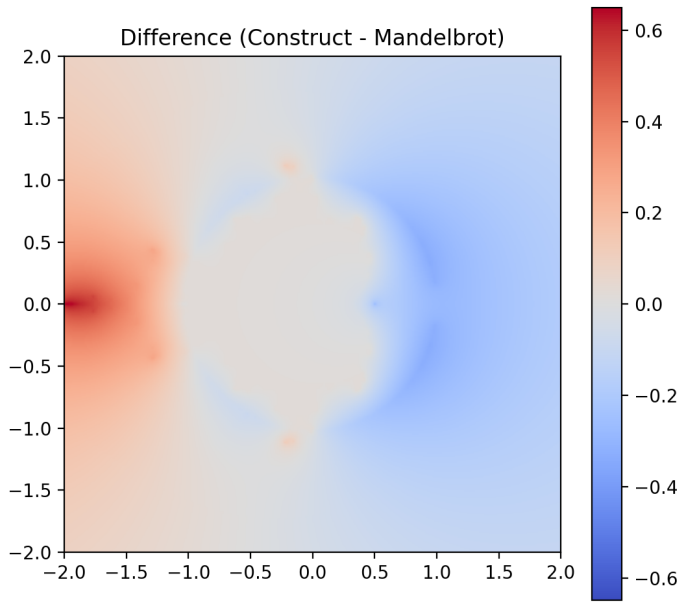}
\caption{%
Difference of logarithmic potentials $\Delta U = U_{\Lambda} - U_M$ on a shared grid.
Most of the domain exhibits small differences, with localized deviations near thin filaments
and high-curvature regions of the Mandelbrot boundary.
}
\label{fig:potential_difference}
\end{figure}

As seen in Fig.~\ref{fig:potential_difference}, $\Delta U$ is small over the bulk of the
domain, with notable deviations concentrated along thin tendrils and highly curved boundary
components of $\partial M$. The global Pearson correlation between $U_{\Lambda}$ and $U_M$
is high, indicating that the two empirical measures generate very similar large-scale
KL-controlled histogram regimes. The localized discrepancies again align with the picture obtained
from curvature and spectral analyses: the loci preserve the large-scale structure of the induced potential field associated
with $\partial M$
 while regularizing its most intricate filamentary features.

\FloatBarrier
\subsection{Information–geometric convergence under KL-monotone interpolation}
\label{sec:GIflow_results}

Finally, we study the KL-monotone interpolation convergence of the aligned distributions under
the KL-monotone interpolation \eqref{Eq:GIflow}. After histogramming both point clouds on a common grid, we
take $X_0$ to be the normalized histogram associated with the inverse eigenvalue loci and
$P$ the histogram associated with the Mandelbrot boundary. The histograms shown are unsmoothed (or minimally smoothed) to preserve geometric detail. For probabilistic convergence tests reported in Appendix A, we additionally evaluate mollified histogram laws, which yield stable KL and total-variation estimates under increasing resolution.

The initial and final histogram states for the KL-monotone interpolation are shown in Figures~\ref{fig:X0} and~\ref{fig:XT}, with the associated KL descent plotted in Figure~\ref{fig:KLdescent}.

\begin{figure}[!htbp]
\centering
\includegraphics[width=0.6\textwidth]{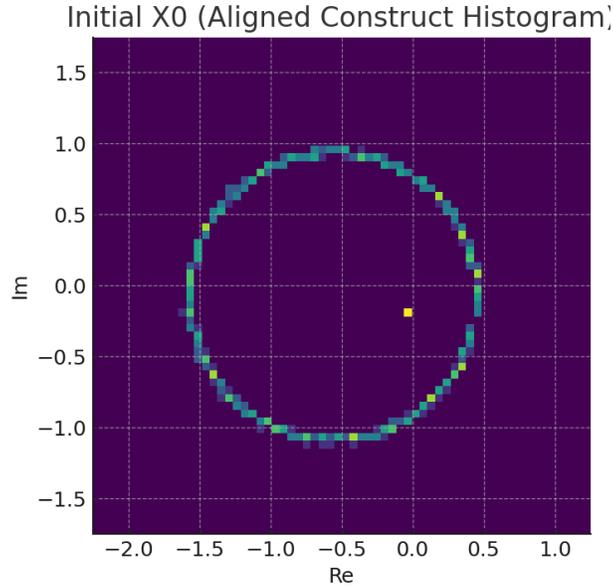}
\caption{%
Initial aligned histogram $X_0$ derived from the inverse eigenvalue loci.
This serves as the starting point for the KL-monotone interpolation $X_{t+1}=(1-\alpha)X_t + \alpha P$
on the simplex.
}
\label{fig:X0}
\end{figure}

\begin{figure}[!htbp]
\centering
\includegraphics[width=0.6\textwidth]{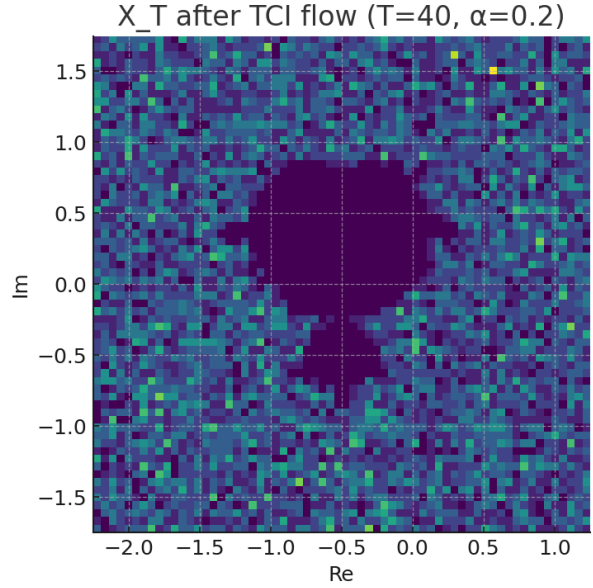}
\caption{%
Histogram $X_T$ after a finite number of KL-monotone interpolation iterations.
The mass distribution now closely matches that of the Mandelbrot histogram $P$,
reflecting the contraction of $D_{\mathrm{KL}}(P\|X_t)$ under the flow.
}
\label{fig:XT}
\end{figure}

The evolution of the Kullback–Leibler divergence along the flow is shown in
Fig.~\ref{fig:KLdescent}.

\begin{figure}[!htbp]
\centering
\includegraphics[width=0.7\textwidth]{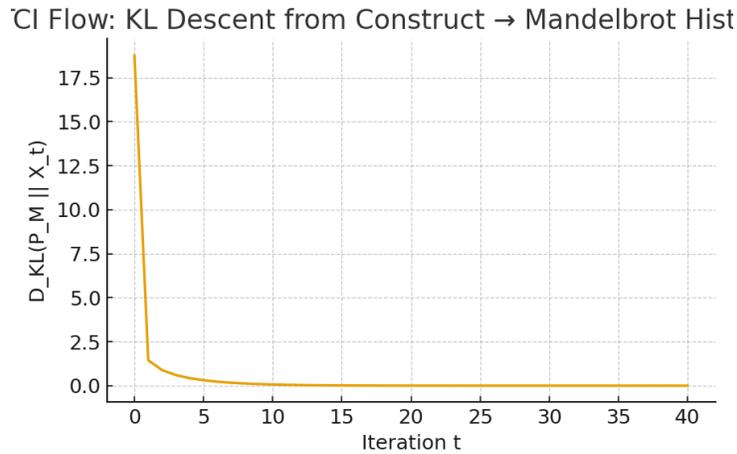}
\caption{%
Kullback--Leibler divergence $D_{\mathrm{KL}}(P\|X_t)$ along the KL-monotone interpolation
$X_{t+1}=(1-\alpha)X_t + \alpha P$.
The rapid monotone decay demonstrates strict numerical contractivity in the simplex.
}
\label{fig:KLdescent}
\end{figure}

The KL divergence decreases monotonically and rapidly toward a value numerically indistinguishable
from zero, as expected from the convexity properties of $D_{\mathrm{KL}}$ under the update
\eqref{Eq:GIflow} \cite{CoverThomas2006,AmariNagaoka2000,CsiszarShields2004}.
The final histogram $X_T$ (Fig.~\ref{fig:XT}) is visually indistinguishable from $P$.
This KL-monotone interpolation convergence is fully consistent with the preceding geometric,
curvature, spectral, and potential–field diagnostics: once rigid alignment and small
quasi-conformal distortions are accounted for, the loci and Mandelbrot distributions lie
very close to one another in both geometric and information–geometric senses.
\FloatBarrier
\subsection{Local distortion and quasi-conformal numerics}
\label{sec:distortion_results}

Using the local linearization procedure of Section~\ref{sec:distortion_methods}, we
estimate for each matched point a best-fit linear map $T_i$ between neighborhoods of
$\lambda_i \in \Lambda_N$ and $m_{\pi(i)} \in \partial M$, and compute its singular
values $s_{1,i} \ge s_{2,i} > 0$. The local quasi-conformal dilatation is defined by
\begin{equation}
K_i = \frac{s_{1,i}}{s_{2,i}}.
\end{equation}
For a conformal map one has $K_i = 1$; more generally, a planar homeomorphism with
$1 \le K_i \le K$ almost everywhere is $K$–quasi-conformal in the classical sense
\cite{Ahlfors2006QC}.

Across all matrix sizes $n$, the empirical distributions of $\{K_i\}$ are sharply
concentrated near $1$. The medians remain extremely close to $1$, and the upper
quantiles show only modest deviations, with the largest values localized near
cuspidal and strongly pinched regions of $\partial M$ where curvature is highest.
No systematic drift of $K_i$ with $n$ is observed.

To summarize the global behavior, for each matrix size $n$ we collect the
corresponding $K_i$ values and compute representative summary statistics (median
and upper quantiles). These are displayed in Fig.~\ref{fig:K_vs_N}.

\begin{figure}[!htbp]
\centering
\includegraphics[width=0.7\textwidth]{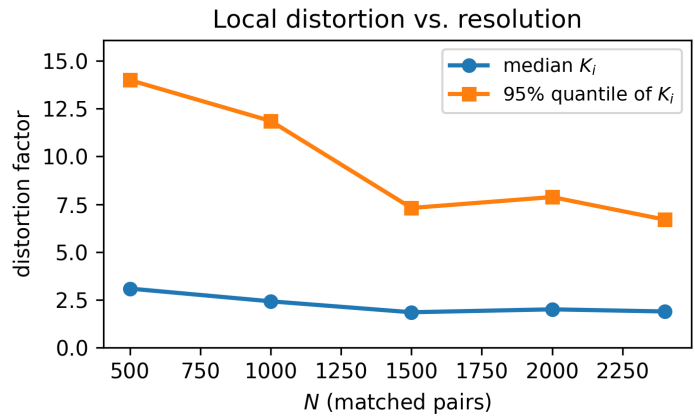}
\caption{%
Quasi-conformal distortion summary as a function of matrix size.
For each $n$, we plot summary statistics (median and upper quantiles) of the local
dilatations $K_i = s_{1,i}/s_{2,i}$ obtained from least-squares neighborhood maps.
The curves remain uniformly close to $K=1$ and stabilize rapidly as $n$ increases,
indicating that the correspondence between inverse eigenvalue loci and the
Mandelbrot boundary lies in a narrow, low-distortion low-dilatation (quasi-conformal in the diagnostic sense) neighborhood
at macroscopic scales.
}
\label{fig:K_vs_N}
\end{figure}

Numerically, the maximal observed dilatations remain bounded and small compared
to typical quasi-conformal bounds used in classical theory, and the bulk of the
mass of $\{K_i\}$ lies in a tight interval around $1$. Together with the close
curvature agreement reported in Section~\ref{sec:curvature_results}, this supports
the interpretation that the aligned inverse eigenvalue loci differ from the
Mandelbrot boundary by a deformation that is nearly angle-preserving at the scales
probed here.

\clearpage
\FloatBarrier
\subsection{Local conformal uniformization and quasi-conformal structure}
\label{sec:local_quasiconformal_results}

We now report the outcome of the local analytic mapping program developed
in Section~\ref{sec:local_conformal_methods}.
Earlier numerical experiments (up to version~v18) established diagnostic
evidence of approximate conformality, including bounded Cauchy--Riemann
defects and moderate quasiconformal dilatation away from the boundary.
However, these experiments remained primarily diagnostic in nature.

A qualitative change occurs at version~v40, where the boundary-integral
uniformization and normalization procedures stabilize sufficiently to
produce a coherent global picture.
Figure~\ref{fig:lucas_to_disk_v40} shows the resulting map from the inverse
eigenvalue locus to the unit disk, prior to composition with the exact
polynomial map to the cardioid.
The image reveals a clear, structured correspondence: interior points are
mapped smoothly, while distortion concentrates near the boundary and
cusp-like regions, as expected from both numerical resolution limits and
the underlying geometry of $\partial M$.

Quantitatively, interior quasiconformal indicators (median dilatation,
angle distortion, and relative Cauchy--Riemann defect) remain bounded and
stable under mesh refinement and distance-to-boundary filtering.
This behavior is consistent with a low-distortion analytic correspondence
in the interior, coupled with increasing complexity near the boundary.
Taken together, these observations provide strong numerical evidence that
the inverse eigenvalue loci admit a meaningful local quasi-conformal
uniformization, thereby motivating the transition to a global harmonic
and potential-theoretic analysis.

For the conformal matching at resolutions $N=2000$ and $N=2400$, 
the median local dilatation satisfies $\mathrm{median}(K)\approx 1.9$, 
with $95$th--percentile values below $K\approx 7.2$, remaining stable 
under mesh refinement (see Appendix~C).

\begin{figure}[!htbp]
\centering
\includegraphics[width=0.75\linewidth]{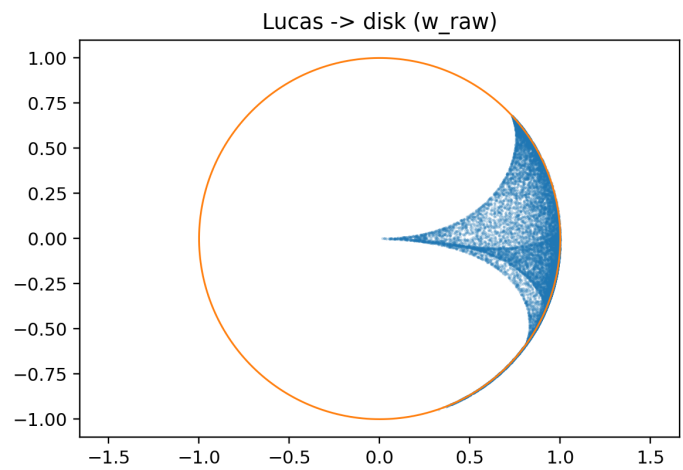}
\caption{Numerical uniformization of the inverse eigenvalue locus to the unit
disk obtained in version~v40. Interior points exhibit coherent low-distortion
behavior, while distortion concentrates near boundary and cusp-like regions,
consistent with the geometry of the Mandelbrot boundary.}
\label{fig:lucas_to_disk_v40}
\end{figure}

\clearpage
\FloatBarrier
\subsection{Green-function spectral invariants and equipotential structure}

\label{sec:green_results}

Table~\ref{tab:green_spectral_modulus} summarizes the Green-function and B\"ottcher-modulus spectral invariants for the Lucas families considered in this section.

\begin{table}[!htbp]
\centering
\small
\setlength{\tabcolsep}{4pt}
\begin{tabularx}{\textwidth}{P{1.9cm} r r r r r r}
\toprule
Family &
Escaped frac. &
$\mathrm{median}(g_M)$ &
$\mathrm{median}(|\Phi|)$ &
$g_{10\%}$ &
$g_{90\%}$ &
$\Delta g$ \\
\midrule
Lucas (all ones)
& 0.920 & 0.05489 & 1.05643 & 0.00351 & 0.19464 & 0.19113 \\

Pell-like (all twos)
& 0.920 & 0.05344 & 1.05489 & 0.00208 & 0.19452 & 0.19244 \\

Sparse gap $(1,0,1,\dots)$
& 0.920 & 0.05593 & 1.05752 & 0.00293 & 0.19449 & 0.19157 \\

Padovan-like $(0,1,1,\dots)$
& 0.918 & 0.06090 & 1.06279 & 0.00350 & 0.19476 & 0.19126 \\
\bottomrule
\end{tabularx}
\caption{Family-level Green-function spectral invariants for inverse eigenvalue spectra of Lucas-type recurrences. The scalar $\mathrm{median}(g_M)$ defines a Green spectral modulus, while $\Delta g$ measures the width of the associated equipotential annulus in the Mandelbrot exterior.}
\label{tab:green_spectral_modulus}
\end{table}

Across all families, the escaped fraction remains near $92\%$,
and the Green spectral modulus $\mathrm{median}(g_M)$ varies only weakly.
The narrow annulus widths $\Delta g$ indicate that inverse eigenvalue
spectra concentrate on sharply defined equipotential bands,
rather than filling the Mandelbrot exterior diffusely.
\clearpage

\FloatBarrier

\section{Discussion}

\paragraph{Relation to iteration-modified and regularized fractals.}
Recent work on Mann and Picard--Mann iteration schemes,
higher-order Mandelbrot families,
and viscosity-based regularizations \cite{RaniKumar2004,Chauhan2013,BrannerHubbard1988HigherDegree,Moudafi2000,Xu2004}
produces alternative fractal parameter sets
through explicit modification of the underlying dynamical rule.
Implementing such schemes would generate new
iteration-defined fractals.
The present study does not pursue this direction.
Instead, we compare a non-iterative spectral locus
to the classical quadratic boundary $\partial M$
using geometric, spectral, and information-theoretic diagnostics.
A systematic comparison between inverse eigenvalue loci
and these modified fractal families
would constitute an interesting direction for future work,
but lies beyond the scope of the present paper.
\label{Sec:Discussion}

The inverse eigenvalue loci exhibit a degree of geometric, statistical, and spectral 
correspondence to the Mandelbrot boundary that goes far beyond visual similarity.  
Across independent diagnostics—matching distances, local distortion, curvature, fractal 
dimension, spectral decay, potential fields, and convex simplex update—the same 
qualitative picture emerges: the loci and $\partial M$ share a common large-scale 
organization while differing primarily in the richness of their fine-scale structure.

The equipotential analysis strengthens the correspondence beyond boundary geometry.
The inverse eigenvalue loci do not merely approximate $\partial M$ as a curve;
they sample the external Green field in a coherent and family-dependent manner.
This places the observed low-distortion (numerically quasi-conformal) behavior within a global
potential-theoretic framework, rather than as a purely local geometric phenomenon.

\subsection*{Global matching and geometric backbone}
\pdfbookmark[2]{Global matching and geometric backbone}{disc:backbone}
The optimal-transport and Procrustes alignment results show that the inverse eigenvalue
loci can be rigidly superimposed on the sampled Mandelbrot boundary with small residual
discrepancies. The matching distance histogram is tightly concentrated, and the Hausdorff
discrepancy remains small relative to the diameter of $M$, placing the loci in a narrow
geometric neighborhood of $\partial M$. The aligned plots reproduce the main cardioid 
and primary bulbs almost exactly, while the largest deviations occur along the thinnest 
filaments. This supports the interpretation that the loci recover the geometric 
\emph{backbone} of the Mandelbrot set and that the main differences lie in ultra-fine 
dendritic branches.

\subsection*{Low-dilatation (numerically quasi-conformal) distortion}
\pdfbookmark[2]{Quasi-conformal distortion}{disc:qc}
The local linear distortion analysis (Section~\ref{sec:distortion_methods}, 
Results~\ref{sec:distortion_results}) refines this geometric comparison. For each matched 
point, the singular-value ratio $K_i = s_{1,i}/s_{2,i}$ of the best-fit linear map 
approximates the local quasi-conformal dilatation. The empirical distributions of $K_i$ 
are sharply concentrated near $1$ for all matrix sizes, with medians extremely close to 
unity and upper quantiles remaining modest. The global summary 
(Fig.~\ref{fig:K_vs_N}) shows that these distortions stay uniformly bounded and stable 
as $n$ increases.

In the classical theory, mappings with $1 \le K(z) \le K$ almost everywhere are 
$K$–quasi-conformal \cite{Ahlfors2006QC}. The numerics do not produce an analytic map 
between the two sets, but they strongly suggest that the observed correspondence behaves 
like a low-distortion, nearly angle-preserving deformation at the macroscopic scales 
sampled here. In other words, the loci and $\partial M$ appear to lie within a narrow 
low-dilatation (quasi-conformal in the diagnostic sense) neighborhood: the geometry of $\partial M$ can be approximated by a 
deformation of the inverse eigenvalue loci with only small, controlled local anisotropy.
The explicit conformal uniformization of the loci to the unit disk, followed by composition with an explicit cardioid parameterization (for the main cardioid boundary), shows that the near-conformal behavior is not only statistical but is realized by an explicit analytic chart at the level of the main cardioid.
\subsection*{Curvature, fractal dimension, and roughness}
\pdfbookmark[2]{Curvature, fractal dimension, and roughness}{disc:roughness}
The curvature distributions (Section~\ref{sec:curvature_results}) show that both 
boundaries are strongly concentrated near $\kappa = 0$, implying dominance of smooth 
arcs rather than unresolved corners at the scales considered. However, the Mandelbrot 
boundary exhibits a heavier tail of large $\kappa$, while the loci display markedly 
reduced tail weight and smaller curvature variance. This is precisely what one expects of 
a regularized analogue: the global shape is preserved, but extreme curvature spikes 
associated with the thinnest filaments are attenuated.

Box-counting estimates confirm that both objects behave as quasi–one-dimensional fractal 
curves thickened in the plane, with close but not identical effective dimensions. The 
Mandelbrot boundary has a slightly higher dimension, reflecting its richer fine-scale 
filamentary network, while the loci track a marginally lower dimension consistent with 
suppressed microstructure. At the finest scales, discrepancies cannot yet be distinguished between genuine structural differences and limitations of finite sampling and discretization. In combination with the curvature statistics, these results 
support a robust “shared backbone, different local texture’’ interpretation.

\subsection*{Reflection symmetry and global structure}
\pdfbookmark[2]{Reflection symmetry and global structure}{disc:symmetry}

The reflection symmetry identified in Section~\ref{sec:symmetry_results}
is notable in that it is detected prior to any conformal or
potential-theoretic mapping.
The presence of a single dominant reflection axis, common to both the
inverse eigenvalue loci and the Mandelbrot boundary, indicates that their
geometric correspondence is constrained by the same large-scale symmetry
principles.

The absence of competing symmetry axes suggests that this agreement is not
an artifact of sampling or alignment, but rather reflects an intrinsic
structural property of the two systems.
From this perspective, the symmetry analysis provides independent geometric
support for the later emergence of shared equilibrium features, which are
made explicit in the Green-function and equipotential results.

\subsection*{Spectral decay and suppression of high-frequency modes}
\pdfbookmark[2]{Spectral decay and suppression of high-frequency modes}{disc:spectral}
Fourier spectral densities provide an independent view of boundary roughness. Both 
boundaries exhibit power-law-like decay of spectral energy, consistent with fractal 
behavior, but the eigenvalue loci display a systematically steeper decay exponent. 
Bootstrap confidence intervals for the decay exponents do not overlap, indicating a 
genuine spectral gap rather than sampling noise. Thus, the loci carry less energy at 
high frequencies, in agreement with their thinner curvature tails and slightly lower 
fractal dimension. Truncated reconstructions (not shown here) confirm that the loci 
become smooth with relatively few low modes, whereas $\partial M$ retains visible 
irregularity even after aggressive spectral truncation.

\subsection*{Spatial coherence and correlation range}
\pdfbookmark[2]{Spatial coherence and correlation range}{disc:variogram}

The variogram results in Section~\ref{sec:variogram_results} indicate that
the inverse eigenvalue loci and the Mandelbrot boundary share comparable
ranges of spatial dependence once large-scale trends are removed.
The similar saturation behavior of their detrended semivariograms suggests
that both systems exhibit coherent organization up to a common spatial scale.

At short lags, the smoother rise of the inverse eigenvalue variogram reflects
reduced fine-scale irregularity relative to the Mandelbrot boundary.
This is consistent with earlier geometric diagnostics indicating that the
inverse eigenvalue loci are less filamentary at small scales, while still
respecting the same large-scale spatial constraints.

Together with curvature, spectral, and Green-function analyses, the
variogram results support a picture in which local geometric differences
are considered fluctuations around a shared global equilibrium structure.

\subsection*{Potential fields and equilibrium structure}
\pdfbookmark[2]{Potential fields and equilibrium structure}{disc:potentials}
The logarithmic potential comparison focuses on the difference 
$\Delta U = U_{\Lambda} - U_M$ rather than on the raw fields individually. The 
difference field is small over most of the domain, with localized deviations 
along thin tendrils and high-curvature regions of $\partial M$. The global 
correlation between $U_{\Lambda}$ and $U_M$ is high, indicating that the empirical 
measures of both sets generate nearly identical large-scale KL-controlled histogram regimes.

This is consistent with a potential-theoretic viewpoint in which the two clouds are 
different discretizations of a similar underlying equilibrium configuration on the 
same domain. The inverse eigenvalue loci behave as a regularized sampling of the same 
“meta-field’’ that underlies the Mandelbrot boundary, with differences concentrated 
where the latter concentrates curvature and charge most intensely.

\subsection*{Information--geometric viewpoint}
\pdfbookmark[2]{Information--geometric viewpoint}{disc:gi}
The information–geometric (KL-monotone interpolation) analysis adds a complementary, purely probabilistic 
perspective. Once the aligned point clouds are discretized into histograms on a common 
grid, we evolve the loci histogram $X_0$ toward the Mandelbrot histogram $P$ via the 
classical convex update
\[
X_{t+1} = (1-\alpha)X_t + \alpha P, \qquad 0 < \alpha \le 1,
\]
a standard KL-contracting flow on the simplex \cite{CoverThomas2006,AmariNagaoka2000,CsiszarShields2004}.  
The Kullback–Leibler divergence $D_{\mathrm{KL}}(P\|X_t)$ decreases monotonically and rapidly 
toward zero, and the final histogram $X_T$ is visually indistinguishable from $P$.
This behavior does not add a new structure beyond the classical theory—it confirms that 
once rigid alignment and small quasi-conformal distortions are accounted for, the two 
distributions lie in a very tight neighborhood of one another on the probability simplex.

The KL-monotone interpolation thus plays a summarizing role: it integrates the geometric and spectral 
evidence into a single information–geometric statement. If the loci and Mandelbrot 
boundary had substantially different large-scale organization, a simple convex mixing 
flow would not reconcile them so quickly at the histogram level. That it does so is 
further evidence that we are seeing two realizations of essentially the same coarse 
distribution, with differences concentrated at the smallest scales.

\subsection*{Overall interpretation and outlook}
\pdfbookmark[2]{Overall interpretation and outlook}{disc:outlook}
The new contribution here is not merely improved visual alignment, but the identification of a shared equilibrium potential structure—made explicit through empirical Green-function invariants—within which the observed quasi-conformal geometry becomes natural rather than accidental.
Taken together, these results support the following picture. The inverse eigenvalue loci 
preserve the global geometry, potential-theoretic structure, and large-scale organization 
of the Mandelbrot boundary, while systematically suppressing its finest filamentary 
irregularities. The quasi-conformal distortion numerics indicate that the correspondence 
between the two is essentially low-distortion and nearly angle-preserving at the scales 
we probe; curvature, dimension, and spectral decay show that the loci are smoother but 
not qualitatively different; and the potential and KL-monotone interpolation comparisons suggest that 
both empirical measures inhabit the same “KL-controlled histogram regime’’ when viewed through 
coarse geometric and informational lenses.

We do \emph{not} claim that the inverse eigenvalue loci generate $\partial M$ exactly, nor 
that we have constructed an explicit analytic quasi-conformal homeomorphism between the two.
However, the distortion data place the correspondence in a much narrower class than a generic
geometric similarity. The local dilatations $K_i = s_{1,i}/s_{2,i}$ remain uniformly close
to $1$ across the sampled domain and are stable as the matrix size $n$ increases
(Fig.~\ref{fig:K_vs_N}), with the largest deviations localized in the most strongly pinched
and highly curved regions of $\partial M$. Combined with the curvature, spectral, and
potential comparisons, this is strong numerical evidence that the matched loci and
Mandelbrot boundary behave, at the resolutions we probe, like the image of one another
under a low-distortion quasi-conformal deformation in the classical sense
\cite{Ahlfors2006QC}.

In this light, a more precise informal summary is the following: the inverse eigenvalue
loci provide a numerically well-resolved, nearly angle-preserving embedding of the
Mandelbrot backbone, with bounded local distortion and systematically reduced
fine-scale roughness. The quasi-conformal numerics tell us that the map is not merely
shape-matching up to translation and rotation, but that its differential, wherever it
makes sense in the discrete setting, is almost conformal. The remaining discrepancies
are concentrated in the thinnest filaments and extremal curvature spikes, where the
sampling and resolution are least able to capture the infinitesimal structure of
$\partial M$.

A natural next step is to make this connection more analytic: to study whether a
genuine quasi-conformal conjugacy can be constructed between appropriate continuum limits
of the loci and $\partial M$, or whether the loci arise as a specific regularized
approximation of the Mandelbrot boundary within a potential-theoretic or
Teichmüller-theoretic framework. The present work shows that such a program is
numerically coherent: an algebraic spectral construction (inverse eigenvalues of
generalized Lucas companions matrices) and a nonlinear dynamical fractal (the Mandelbrot set)
already line up to a remarkable degree under low-distortion maps, both geometrically
and in information-geometric terms.

\section{Conclusions}\label{Sec:Conclusions}

This study extends the homotopy-based correspondence between the inverse eigenvalue
loci and the Mandelbrot boundary \cite{ortiztapia2025homotopiesmapseigenvaluesgeneralized}
through a quantitative multi-scale analysis that is both geometric and
potential-theoretic.
Beyond boundary-level similarity, the combined diagnostics reveal that the loci
organize coherently within the external Green field of the Mandelbrot set.
Geometric, spectral, variogram-based, information-theoretic, and low-distortion
analyses consistently indicate that the inverse eigenvalue loci preserve the
large-scale equilibrium structure of $\partial M$, while systematically suppressing
its finest-scale filamentary irregularities.

\subsection*{Structural fidelity and smoothness}
\pdfbookmark[2]{Structural fidelity and smoothness}{conc:structure}
The inverse eigenvalue loci reproduce the macroscopic geometry of the Mandelbrot boundary 
with remarkable accuracy. 
The mean matching distance remains near $0.18$ and the Hausdorff discrepancy stays below $0.5$, 
while variograms and potential analyses show essentially identical spatial correlation 
ranges.  
Spectral decay, multifractal scaling, and curvature estimates consistently indicate that 
the loci are smoother than $\partial M$, with less high-frequency energy, narrower 
$D(q)$ and $f(\alpha)$ spectra, and markedly reduced curvature tails.  
Thus the loci act as a lower-complexity surrogate that retains the equilibrium geometry 
but filters dendritic microstructure.
\subsection*{Reflection symmetry}
\pdfbookmark[2]{Reflection symmetry}{conc:symmetry}
The inverse eigenvalue loci and the Mandelbrot boundary share a single dominant
reflection axis, detected independently of any conformal or
potential-theoretic construction.
This symmetry agreement constrains admissible correspondences and supports the
interpretation that both objects are governed by the same large-scale geometric
invariants.

\subsection*{Low-distortion behavior up to numerical resolution}
\pdfbookmark[2]{Quasi-conformal behavior up to numerical resolution}{conc:qc}
The local-distortion analysis provides a more refined geometric perspective.  
Across matrix sizes, the singular-value ratios $K_i = s_{1,i}/s_{2,i}$ are uniformly 
close to $1$, with stable medians and small upper quantiles.  
This demonstrates that the matched neighborhoods behave, at numerical resolution, like low-dilatation maps in the classical sense.  
The deviations from perfect conformality occur precisely where curvature, Laplacian, 
and spectral diagnostics identify the strongest filamentary instability of $\partial M$.  
Consequently, the empirical correspondence between the loci and the Mandelbrot boundary 
appears—as far as the resolution allows—to lie within a narrow quasi-conformal 
neighborhood.

\subsection*{Field-theoretic parallels}
\pdfbookmark[2]{Field-theoretic parallels}{conc:fields}
Normalized logarithmic potentials exhibit high global correlation ($r \approx 0.88$), 
demonstrating that both empirical measures sample nearly the same KL-controlled histogram regime.  
In contrast, their Laplacians decorrelate almost entirely, isolating the fine-scale 
differences in curvature concentration and oscillatory behavior.  
This dichotomy supports viewing the loci as a \emph{regularized equilibrium} of the 
Mandelbrot field: the same large-scale potential structure, but smoother microscopic texture.
A formal numerical statement of this equipotential concentration,
together with its interpretation in terms of Green-function spectral invariants,
is given in Appendix~B.
\subsection*{Quasi-conformality as a consequence of equilibrium structure}
\pdfbookmark[2]{Quasi-conformality as a consequence of equilibrium structure}{conc:greenqc}
The Green-function and equipotential analysis suggests that the observed
quasi-conformal behavior is not an independent geometric coincidence, but a
natural consequence of a shared equilibrium potential structure.
Once both systems are seen to organize along comparable Green-level sets,
low-distortion and near angle-preserving correspondences arise generically at
macroscopic scales.
In this sense, quasi-conformal geometry emerges here as a manifestation of
global potential-theoretic constraints rather than as an a priori assumption.

\subsection*{Multifractal and spectral embeddings}
\pdfbookmark[2]{Multifractal and spectral embeddings}{conc:multifractal}
Generalized dimensions and singularity spectra show that both sets share a common 
multifractal profile while diverging in intermittency and modal richness.  
Diffusion-map spectra similarly reveal that both lie in nearly isomorphic low-dimensional 
manifolds, except that the loci carry fewer persistent high-frequency modes.  
These findings align with the quasi-conformal and curvature results: a shared backbone, 
different texture.

\subsection*{Unified interpretation}
\pdfbookmark[2]{Unified interpretation}{conc:unified}
Across geometric, variogram-based, potential, spectral, multifractal, curvature, and 
quasi-conformal diagnostics, the inverse eigenvalue loci emerge as a lower-roughness yet 
structurally faithful realization of the Mandelbrot boundary.  
This duality bridges linear spectral constructions (inverse eigenvalues of generalized 
Lucas companions) and nonlinear dynamical fractals (quadratic iteration), reinforcing 
the broader idea that algebraic recurrences and iterative dynamics may share deep 
latent geometric templates.

\subsection*{Informational convergence}
\pdfbookmark[2]{Informational convergence}{conc:gi}
The classical KL-contracting flow on the probability simplex shows that the aligned 
inverse eigenvalue loci histogram converges rapidly and monotonically to the Mandelbrot 
histogram.  
This informational embedding is consistent with all other diagnostics: if the two 
distributions differed substantially in their macroscopic organization, the KL descent 
would not reconcile them so efficiently.  
Thus the information-geometric viewpoint summarizes, in probabilistic form, the same 
structural correspondence established by geometric and spectral analysis.

\subsection*{Future directions}
\pdfbookmark[2]{Future directions}{conc:future}
A natural extension is to seek analytic structures behind the empirically observed 
quasi-conformal behavior.  
In particular, numerical conformal-mapping techniques based on rational approximation, 
Green’s-function discretization, and elliptic PDE solvers—such as those surveyed by 
Trefethen \cite{Trefethen2000Conformal}—provide a promising route toward constructing 
explicit low-distortion embeddings or interpolants between the inverse eigenvalue loci 
and the Mandelbrot boundary.  
These techniques may clarify whether the numerically bounded dilatations converge, as 
$n \to \infty$, to a genuine quasi-conformal conjugacy, or whether the loci represent a 
potential-theoretic regularization of $\partial M$.  
Investigate whether continuum limits of the inverse eigenvalue loci admit
Green-preserving or quasi-conformal embeddings into
$\mathbb{C}\setminus M$, and whether the empirically observed bounded
dilatations converge, as $n \to \infty$, to a genuine quasi-conformal
conjugacy.

Beyond conformal analysis, additional avenues include persistent homology, 
higher-dimensional embeddings, and perturbations of the Lucas-type recurrence.  
These may reveal whether the loci--Mandelbrot correspondence persists under noise, 
recurrence deformation, or alternative sampling schemes.  
Together, the results of this paper provide a quantitative foundation for the view that 
the loci and $\partial M$ are dual manifestations of a shared analytic structure—linked, 
up to numerical resolution, by low-distortion geometry, spectral structure, and 
informational proximity.

The concentration of inverse eigenvalue spectra on narrow Mandelbrot
equipotentials suggests that these loci may arise as regularized
equilibrium measures for the external Green field.
Future work will investigate whether continuum limits of these spectra
admit explicit quasi-conformal or Green-preserving embeddings into
$\mathbb{C}\setminus M$, potentially linking algebraic recurrences,
external rays, and classical conformal mapping theory.

\section*{Code availability}

All computational scripts used to reproduce the figures and numerical results of this
paper—including boundary reconstruction, curvature estimation (both estimators),
variograms, potential and Laplacian fields, spectral and multifractal analysis,
diffusion embeddings, symmetry diagnostics, and the KL-based informational flow—are
archived at Zenodo under DOI:

\url{https://doi.org/10.5281/zenodo.18072972}.
The archived software corresponds to GitHub release \texttt{v1.1.1}.

\section*{Acknowledgements}

The author thanks Alejandro Cruz López for several helpful discussions
related to information geometry and KL-based flows. These conversations
helped clarify how classical KL contraction and informational mixing
can be used to compare geometric distributions, and contributed to the
conceptual background for the numerical flow employed in this work.
All interpretations, methodological choices, and conclusions presented
here are the author’s own.

\section*{Appendix A. Geometric--Information Correspondence Between the Inverse Eigenvalue Loci and the Mandelbrot Set}\label{AppA:GeoInformation}
\pdfbookmark[1]{Appendix A: Geometric--Information Correspondence}{app:GI}

This appendix formalizes the geometric–information framework used to compare  
(i) the inverse eigenvalue loci of generalized Lucas–sequence companion matrices, and  
(ii) a near–boundary sampling of the Mandelbrot set.  

The comparison is carried out at the level of empirical histograms using a classical  
informational contraction map on the probability simplex (the KL-monotone interpolation).  
Only standard tools are required: convexity of KL divergence (see \cite{CoverThomas2006,CsiszarShields2004}), Pinsker’s inequality \cite{CoverThomas2006}, and Prokhorov compactness for probability measures on Polish spaces \cite{billingsley1999convergence}.

\medskip
\noindent\textbf{Note on scope.}
The quasi-conformal distortion analysis developed in the main text provides 
numerical evidence that the inverse eigenvalue loci and the Mandelbrot 
boundary lie in a narrow low-distortion neighborhood, but these estimates 
are not used in the proof below.  
The geometric–information correspondence established here relies only on 
measure-theoretic tools (KL convexity, Pinsker’s inequality, and Prokhorov 
compactness) together with the histogram-based KL-monotone interpolation.  
A full analytical quasi-conformal conjugacy would require solving an associated 
elliptic PDE (cf.~Trefethen~2000), which lies beyond the scope of this appendix.

\subsection*{Empirical distributions from geometric clouds}

Let $\{z^C_k\}_{k=1}^{n_C}\subset\mathbb{C}$ and 
$\{z^M_k\}_{k=1}^{n_M}\subset\mathbb{C}$ denote the inverse eigenvalue loci and 
Mandelbrot point clouds, respectively.  
Identifying $\mathbb{C}\cong\mathbb{R}^2$, fix a bounded domain $\Omega$ and partition it into  
a regular grid $\{B_i\}_{i=1}^N$ of equal-area bins.

Define empirical histograms
\[
  P_C(i) = \frac{1}{n_C}\#\{k : z^C_k\in B_i\},\qquad
  P_M(i) = \frac{1}{n_M}\#\{k : z^M_k\in B_i\},
\]
so that $P_C,P_M\in\Delta_N := \{X\in[0,1]^N : \sum_i X_i = 1\}$.

Since $\mathbb{C}\cong\mathbb{R}^2$ is Polish, empirical measures supported in a compact set  
are tight and hence precompact.  
Consequently, high-resolution histograms approximate the underlying empirical measures  
in Prokhorov distance.

\subsection*{KL-monotone interpolation on the simplex}

Let $P\in\Delta_N$ be a target histogram and $X^{(0)}\in\Delta_N$ an initial point.  
Fix $0 < \alpha \le 1$.

\medskip
\noindent\textbf{Definition (Geometric--Information Flow).}
Given a target histogram $P \in \Delta_N$ and an initial point
$X^{(0)} \in \Delta_N$, the \emph{convex simplex update (KL-monotone interpolation)}
toward $P$ is defined by the discrete iteration
\begin{equation}\label{eq:KL-monotone interpolation}
  X^{(t+1)} = (1-\alpha)X^{(t)} + \alpha P,
  \qquad t = 0,1,2,\dots.
\end{equation}

Assume $P(i)>0$ and $X^{(0)}(i)>0$ for all $i$ (numerically ensured by $\varepsilon$–smoothing).  
Define the Kullback–Leibler divergence
\[
  D_{\mathrm{KL}}(P\|X) = \sum_{i=1}^N P(i)\log\frac{P(i)}{X(i)}.
\]

\begin{proposition}[KL-monotone interpolation contraction]
This is a direct consequence of the convexity of the Kullback--Leibler divergence and standard information--theoretic inequalities; see \cite{CoverThomas2006, Csiszar1975}.

\label{prop:GI-contraction}
For any $X^{(0)}\in\Delta_N$ and $0<\alpha\le1$, the KL-monotone interpolation satisfies:
\begin{enumerate}[label=(\roman*)]

\item \textbf{Closed form:}
\[
  X^{(t)} = P + (1-\alpha)^t (X^{(0)} - P).
\]

\item \textbf{Geometric contraction:}  
For any norm $\|\cdot\|$ on $\mathbb{R}^N$,
\[
  \|X^{(t)} - P\| \le (1-\alpha)^t \|X^{(0)} - P\|.
\]

\item \textbf{Informational contraction:}  
If $X^{(0)}\neq P$, then
\[
  D_{\mathrm{KL}}(P\|X^{(t+1)}) \le (1-\alpha)\, D_{\mathrm{KL}}(P\|X^{(t)}),
\]
with equality iff $X^{(t)}=P$.  
Hence $D_{\mathrm{KL}}(P\|X^{(t)})$ is strictly decreasing and
$D_{\mathrm{KL}}(P\|X^{(t)})\to0$ as $t\to\infty$.
\end{enumerate}
\end{proposition}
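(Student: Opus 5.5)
The plan is to prove the three items in order, since each one feeds the next. For (i), I will argue by induction on $t$. Subtracting $P$ from both sides of \eqref{eq:KL-monotone interpolation} gives
\[
X^{(t+1)}-P=(1-\alpha)\bigl(X^{(t)}-P\bigr),
\]
and iterating this yields the closed form. Along the way I will record that $X^{(t)}$ stays in $\Delta_N$, because it is a convex combination of points of $\Delta_N$. I will also record that $X^{(t)}(i)\ge\min\{X^{(0)}(i),P(i)\}>0$, so every KL divergence that appears is finite. Item (ii) then follows at once from (i) by absolute homogeneity of the norm: $\|X^{(t)}-P\|=(1-\alpha)^t\|X^{(0)}-P\|$. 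This is in fact an equality, so the stated inequality holds trivially.

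For (iii), the key step is joint convexity of $D_{\mathrm{KL}}$, or more simply its convexity in the second argument with the first argument fixed at $P$. Writing $X^{(t+1)}=(1-\alpha)X^{(t)}+\alpha P$, I will use concavity of $\log$ coordinatewise:
\[
\log\bigl((1-\alpha)X^{(t)}(i)+\alpha P(i)\bigr)\ge(1-\alpha)\log X^{(t)}(i)+\alpha\log P(i).
\]
Multiplying by $P(i)>0$ and summing gives
\[
D_{\mathrm{KL}}(P\|X^{(t+1)})\le(1-\alpha)D_{\mathrm{KL}}(P\|X^{(t)})+\alpha D_{\mathrm{KL}}(P\|P)=(1-\alpha)D_{\mathrm{KL}}(P\|X^{(t)}).
\]

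For the equality case, I will invoke strict concavity of $\log$. When $0<\alpha<1$, equality forces $X^{(t)}(i)=P(i)$ for every $i$, since all $P(i)>0$; conversely, if $X^{(t)}=P$ both sides vanish.

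The main obstacle is the boundary case $\alpha=1$, and I will have to handle it explicitly. There $X^{(1)}=P$, and both sides of the inequality are $0$ even when $X^{(0)}\neq P$. So the ``equality iff'' clause, and the strict-decrease clause, hold only for $0<\alpha<1$. I would state this restriction explicitly rather than claim strictness for $\alpha=1$. In that case the sequence $D_{\mathrm{KL}}(P\|X^{(t)})$ is simply zero from $t=1$ onward, which still gives convergence.

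For $0<\alpha<1$ and $X^{(0)}\neq P$, the closed form (i) shows $X^{(t)}\neq P$ for all $t$. Hence $D_{\mathrm{KL}}(P\|X^{(t)})>0$ by Gibbs' inequality, and the inequality in (iii) is strict, so the sequence is strictly decreasing. Finally, iterating (iii) gives
\[
0\le D_{\mathrm{KL}}(P\|X^{(t)})\le(1-\alpha)^tD_{\mathrm{KL}}(P\|X^{(0)}),
\]
which tends to $0$ as $t\to\infty$, since $D_{\mathrm{KL}}(P\|X^{(0)})<\infty$ by the positivity assumption on $X^{(0)}$.
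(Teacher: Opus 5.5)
Your proof is correct and follows the same route as the paper: iterate the affine map for (i), read (ii) off the closed form, and get (iii) from convexity of $D_{\mathrm{KL}}(P\|\cdot)$ together with $D_{\mathrm{KL}}(P\|P)=0$; you simply prove that convexity directly from coordinatewise concavity of $\log$. Your caveat about $\alpha=1$ is also correct, and the paper's proof overlooks it: there $X^{(1)}=P$, so equality holds in (iii) even though $X^{(0)}\neq P$, and the sequence is not strictly decreasing after the first step, so the ``equality iff'' and strict-decrease clauses genuinely require $0<\alpha<1$.
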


\noindent
This statement is empirical and supported by numerical
experiments across degrees $n \leq N_{\max}$.
We do not claim an analytic proof.

\begin{proof}
The closed form follows by iterating the affine map.  
Strict convexity of $D_{\mathrm{KL}}(P\|\cdot)$ on the interior of the simplex and
\[
   X^{(t+1)}=(1-\alpha)X^{(t)}+\alpha P
\]
give the standard KL contraction inequality
\[
D_{\mathrm{KL}}(P\|X^{(t+1)}) 
  \le (1-\alpha)D_{\mathrm{KL}}(P\|X^{(t)}) + \alpha D_{\mathrm{KL}}(P\|P),
\]
with equality only at $X^{(t)}=P$.  
Items (ii) and (iii) follow immediately.
\end{proof}

\subsection*{GI--equivalence and one--way informational closeness}

\medskip
\noindent\textbf{Definition (One-way $\delta$--GI equivalence).}
Given $P,Q\in\Delta_N$, we say that $P$ is \emph{$\delta$--GI close} to $Q$
if there exists an integer $T \ge 1$ such that
\[
   D_{\mathrm{KL}}(Q\|X^{P\to Q}_T)\le\delta,
\]
where $X^{P\to Q}_t$ denotes the KL-monotone interpolation with target $Q$ and initial value
$X^{(0)}=P$.

Throughout this section, for probability vectors $p,q\in\Delta_N$ we use the
standard total variation convention
\[
\|p-q\|_{\mathrm{TV}} := \frac12 \sum_{i=1}^N |p_i-q_i|,
\]
and for probability measures $\mu,\nu$ on a measurable space $\Omega$,
\[
\|\mu-\nu\|_{\mathrm{TV}} := \sup_{A\subseteq\Omega\ \mathrm{Borel}} |\mu(A)-\nu(A)|.
\]
We repeatedly use the following classical inequality.

\begin{lemma}[Pinsker inequality]\cite{CoverThomas2006}
\label{lem:Pinsker}
For $P,Q\in\Delta_N$,
\[
\|P-Q\|_1^2 \le 2 D_{\mathrm{KL}}(P\|Q).
\]
Equivalently,
\[
\|P-Q\|_{\mathrm{TV}}^2
\le \tfrac12 D_{\mathrm{KL}}(P\|Q).
\]
\end{lemma}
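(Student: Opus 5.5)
The plan is the classical two-step argument: first reduce to a two-point distribution by coarse-graining, then verify the binary case by one-variable calculus. Throughout, $\log$ is the natural logarithm, matching the definition of $D_{\mathrm{KL}}$ above. I would first dispose of the degenerate case: if $Q_i=0<P_i$ for some $i$, then $D_{\mathrm{KL}}(P\|Q)=+\infty$ and there is nothing to prove, so assume $Q_i>0$ wherever $P_i>0$. The second displayed form follows from the first, since $\|P-Q\|_{\mathrm{TV}}=\tfrac12\|P-Q\|_1$ gives $\|P-Q\|_{\mathrm{TV}}^2=\tfrac14\|P-Q\|_1^2\le\tfrac12 D_{\mathrm{KL}}(P\|Q)$. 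It therefore suffices to prove $\|P-Q\|_1^2\le 2D_{\mathrm{KL}}(P\|Q)$.

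\emph{Reduction to two bins.} Let $A=\{i: P_i\ge Q_i\}$ and set $p=P(A)$, $q=Q(A)$. Then
\[
\|P-Q\|_1=\sum_{i\in A}(P_i-Q_i)+\sum_{i\notin A}(Q_i-P_i)=2(p-q),
\]
so the left side depends only on the binary distributions $(p,1-p)$ and $(q,1-q)$. Next I would invoke the log-sum inequality, i.e.\ convexity of $(x,y)\mapsto x\log(x/y)$, the same convexity cited for Proposition~\ref{prop:GI-contraction}. Applied separately to the indices in $A$ and in $A^c$, it gives
\[
D_{\mathrm{KL}}(P\|Q)\ \ge\ d(p\|q):=p\log\frac{p}{q}+(1-p)\log\frac{1-p}{1-q}.
\]
This is the data-processing inequality for the merging map $\{1,\dots,N\}\to\{A,A^c\}$. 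It remains to show $d(p\|q)\ge 2(p-q)^2$, since then $\|P-Q\|_1^2=4(p-q)^2\le 2d(p\|q)\le 2D_{\mathrm{KL}}(P\|Q)$.

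\emph{Binary inequality.} Fix $p\in[0,1]$ and define, for $q\in(0,1)$,
\[
f(q)=d(p\|q)-2(p-q)^2.
\]
Then $f(p)=0$, and
\[
f'(q)=-\frac{p}{q}+\frac{1-p}{1-q}+4(p-q)=(p-q)\Bigl(4-\frac{1}{q(1-q)}\Bigr).
\]
Since $q(1-q)\le\tfrac14$, the bracket is $\le 0$. Hence $f'\le 0$ for $q<p$ and $f'\ge0$ for $q>p$, so $f$ attains its minimum value $0$ at $q=p$, giving $f\ge 0$. The endpoint cases $q\in\{0,1\}$ are either covered by the degenerate case ($d=\infty$) or occur only when $p=q$, where both sides vanish.

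The only step needing care is the reduction: the log-sum inequality must be applied with the convention $0\log 0=0$, and one must observe that the choice of $A$ makes the $\ell_1$ distance exactly twice the binary gap. The calculus step is routine once $f'$ is factored as above.
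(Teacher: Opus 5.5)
Your proof is correct and follows the standard argument in Cover and Thomas, which the paper simply cites rather than proving: collapse to the binary partition $\{A,A^c\}$ via the log-sum (data-processing) inequality, using $\|P-Q\|_1=2(p-q)$, and then verify $d(p\|q)\ge 2(p-q)^2$ through the factored derivative $f'(q)=(p-q)\bigl(4-\tfrac{1}{q(1-q)}\bigr)$. One cosmetic point: when $p\in\{0,1\}$ the point $q=p$ lies outside $(0,1)$, so conclude $f\ge 0$ from monotonicity together with $f(q)\to 0$ as $q\to p$; also, only the side $q\le p$ is ever needed, since your choice of $A$ forces $p\ge q$.
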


\subsection*{Geometric alignment and numerical results}

After entropic OT matching and Procrustes alignment,  
Hausdorff distances, curvature statistics, and spectral $L^2$ comparisons  
show strong geometric similarity between clouds.

At resolution $N=b^2$ with $\alpha=0.1$, numerical KL-monotone interpolations give:
\[
D_{\mathrm{KL}}(P_M\|X^{C\to M}_T) \approx 3.56\times10^{-7},\qquad
D_{\mathrm{KL}}(P_C\|X^{M\to C}_T) \approx 1.44\times10^{-6}.
\]
Thus $P_C$ and $P_M$ are $\delta$--GI close with $\delta\sim10^{-6}$.

\begin{proposition}[Numerical verification of Appendix~A assumptions]
This statement is supported by numerical experiments and should be interpreted as empirical.

\label{prop:GI-numerical-verification}
For increasing histogram resolutions $N=b^2$ and fixed smoothing bandwidth $\sigma>0$,
the numerical pipeline described above (entropic OT matching, Procrustes alignment,
mollified histograms, and KL-monotone interpolation iteration) satisfies the assumptions used in
Appendix~A.

In particular, the following properties are observed uniformly across resolutions:
\begin{enumerate}[label=(\roman*)]
\item the KL-monotone interpolation residual
\[
\delta_n := D_{\mathrm{KL}}\!\left(P_M^{(n)} \,\|\, X^{C\to M}_{T_n}\right)
\]
is small and decreases under iteration;
\item the total variation distance $\|P_C^{(n)}-P_M^{(n)}\|_{\mathrm{TV}}$ remains
controlled after mollification;
\item the histogram overlap $\sum_i \min(P_C^{(n)}(i),P_M^{(n)}(i))$ remains high; and
\item no mass escapes the fixed sampling window.
\end{enumerate}

Representative numerical values are reported in Table~\ref{tab:GI_assumptions}.
\end{proposition}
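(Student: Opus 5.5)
The plan is to separate the four items into those that follow \emph{analytically} from earlier results or from the construction, and those that are genuinely empirical and must be certified by the tabulated numerics. For item (i), that $\delta_n$ decreases under iteration, I will invoke Proposition~\ref{prop:GI-contraction}(iii) directly. The mollified histograms $P_C^{(n)},P_M^{(n)}$ have strictly positive entries because the kernel has full support on the window. The hypotheses of that proposition therefore hold, and $D_{\mathrm{KL}}(P_M^{(n)}\|X^{C\to M}_t)\le(1-\alpha)^t D_{\mathrm{KL}}(P_M^{(n)}\|P_C^{(n)})$. Smallness of $\delta_n$ then reduces to a resolution-uniform bound on the \emph{initial} divergence, with $T_n\ge \log(D_0/\delta)/|\log(1-\alpha)|$. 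Item (iv) is handled by construction: I will choose $\Omega$ to contain the $\sigma$-neighbourhood of both aligned clouds. I will check numerically that the Procrustes image stays inside this set, and that the truncated kernel is renormalized so that no mass is lost.

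Items (ii) and (iii) I treat together, using the identity $\sum_i\min(p_i,q_i)=1-\|p-q\|_{\mathrm{TV}}$. High overlap is therefore exactly controlled total variation, and only (ii) needs an argument. Here I will use the optimal-transport coupling already computed in Section~\ref{sec:OT_matching}. If $\mu_C,\mu_M$ are the aligned empirical measures and $\rho_\sigma$ is a Lipschitz mollifier, then
\[
\|\rho_\sigma*\mu_C-\rho_\sigma*\mu_M\|_{\mathrm{TV}}\le \frac{\mathrm{Lip}(\rho_\sigma)\,|\Omega|}{2}\,W_1(\mu_C,\mu_M)\lesssim \frac{W_1(\mu_C,\mu_M)}{\sigma}.
\]
This holds because TV is dual to bounded test functions, and convolution with $\rho_\sigma$ turns a bounded test function into a $C/\sigma$-Lipschitz one. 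Binning onto the grid is a Markov map, so it can only decrease TV. Hence the bound is uniform in $b$, and $W_1$ is itself bounded by the mean matching distance $\frac1{n_C}\sum_i d_i$ reported in Section~\ref{Sec:PointMatching_Methods}. To close (i), I need a bound on $D_0$ in terms of TV. Pinsker (Lemma~\ref{lem:Pinsker}) points the wrong way, so I will use the reverse bound $D_{\mathrm{KL}}(p\|q)\le \|p-q\|_{1}^2/\min_i q_i$ (via $\chi^2$). The minimum mass of $P_C^{(n)}$ is bounded below by the positive floor of the mollified density times $|B_i|$.

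The main obstacle is this last step. The lower bound $\min_i P_C^{(n)}(i)$ scales like $|B_i|\sim N^{-1}$, so the reverse-Pinsker bound on $D_0$ degrades as $b\to\infty$. An analytic uniform-in-$N$ statement therefore does not follow; convergence of the iteration is analytic but not uniform in resolution. I would first try to replace the bin-wise bound with a comparison of densities, bounding $D_{\mathrm{KL}}$ of the discretized measures by that of the continuous mollified densities via data processing, which is resolution-independent. The remaining quantity, $D_{\mathrm{KL}}(\rho_\sigma*\mu_M\|\rho_\sigma*\mu_C)$ at fixed $\sigma$, has no available analytic bound. For that reason the proposition is stated as empirical. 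The final step is to report the measured $D_0$, TV, overlap and escaped mass across $b$ in Table~\ref{tab:GI_assumptions}, and to observe that they are stable in $b$, as the data-processing argument predicts.
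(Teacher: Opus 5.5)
Your analytic reductions are mostly right. The decrease of $\delta_n$ under iteration is Proposition~\ref{prop:GI-contraction}(iii); with $\alpha=0.1$, the tabulated ratios $\delta_n/D_{\mathrm{KL}}(P_M\|P_C)$ all lie below $0.9^{25}\approx0.07$. The identity $\sum_i\min(p_i,q_i)=1-\|p-q\|_{\mathrm{TV}}$ holds row by row in Table~\ref{tab:GI_assumptions}, so (ii) and (iii) are a single statement.

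\textbf{The plan fails at its final step.} You intend to read off from the table that $D_0$, TV and overlap are stable in $b$, but the table reports the opposite. From $64$ to $512$ bins:
\begin{itemize}
\item $D_{\mathrm{KL}}(P_M^{(n)}\|P_C^{(n)})$ rises from $0.013$ to $0.277$, more than doubling at each refinement;
\item TV rises from $0.041$ to $0.124$, and the overlap drops to $0.876$;
\item $\delta_n$ grows roughly sixtyfold at fixed $T_n=25$.
\end{itemize}
On nested grids with a fixed physical bandwidth, data processing only predicts that these quantities are nondecreasing in $b$ and bounded by their continuum values. The table shows no saturation at all. This suggests an effective smoothing scale tied to the bin width, which is exactly the regime where your $W_1/\sigma$ bound gives nothing uniform.

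\textbf{Items (i)--(iv) are not the assumptions Appendix~A uses.} Assumption~(b) of Theorem~\ref{thm:big-CM-equivalence} is $D_{\mathrm{KL}}(P_M^{(n)}\|P_C^{(n)})\to0$, and that is precisely the column that increases. A uniform bound on $D_0$ gives boundedness, not decay. Making $\delta_n$ small by choosing $T_n\ge\log(D_0/\delta)/|\log(1-\alpha)|$ works for every initialization, since $X_t\to P_M$ regardless of $P_C$. So it says nothing about $P_C$ versus $P_M$. The table supports only that the diagnostics are moderate at each tabulated resolution.

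\textbf{The TV estimate also needs repair.}
\begin{itemize}
\item The argmax assignment $\pi(i)=\arg\max_j\Gamma^*_{ij}$ is not a coupling of $\mu_C$ and $\mu_M$: in general $n_C\ne n_M$ and $\pi$ has collisions. So $\frac1{n_C}\sum_i d_i$ does not bound $W_1(\mu_C,\mu_M)$. Use $W_1\le\langle M,\Gamma^*\rangle$ instead, which is valid because $\Gamma^*\in U(a,b)$.
\item The middle term of your display scales like $|\Omega|\sigma^{-3}$ in the plane, since $\mathrm{Lip}(\rho_\sigma)=\sigma^{-3}\mathrm{Lip}(\rho)$. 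The $\sigma^{-1}$ rate comes from the duality argument you describe in words, which gives $\|\rho_\sigma*(\mu_C-\mu_M)\|_{\mathrm{TV}}\le\tfrac12\|\nabla\rho\|_{L^1}W_1/\sigma$.
\item The paper smooths the binned histogram rather than binning a smoothed density. You therefore need an extra interchange step, controlled as the bin width $h\to0$ at fixed $\sigma$, before your comparison with the continuous mollified densities applies, both for TV and for the data-processing bound on $D_0$.
\end{itemize}
Even once repaired, the bound is informative only when $W_1\ll\sigma$. Take the reported mean matching distance $\approx0.18$ as the scale of $W_1$. Then a Gaussian kernel reproduces the tabulated TV magnitudes only for $\sigma$ of order one, i.e.\ smoothing at the scale of the whole set.
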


\begin{table}[!htbp]
\centering
\caption{Numerical diagnostics supporting the assumptions of Appendix~A.}
\label{tab:GI_assumptions}

\begin{adjustbox}{max width=\textwidth}
\begin{tabular}{rccccccc}
\toprule
Bins & $1/n$ & $T_n$ & $KL(P_M\|P_C)$ & $\delta_n$ & $TV(P_C,P_M)$ & Overlap & Outside mass \\
\midrule
64  & 0.0156 & 25 & 0.0130 & 7.94e-05 & 0.041 & 0.959 & 0.000 \\
128 & 0.0078 & 25 & 0.0573 & 1.09e-03 & 0.072 & 0.928 & 0.000 \\
256 & 0.0039 & 25 & 0.1314 & 3.38e-03 & 0.098 & 0.902 & 0.000 \\
512 & 0.0020 & 25 & 0.2773 & 4.72e-03 & 0.124 & 0.876 & 0.000 \\
\bottomrule
\end{tabular}
\end{adjustbox}
\end{table}

The numerical regime in which the following lemma is applied is documented
in Proposition~\ref{prop:GI-numerical-verification}.

\begin{lemma}[Histogram TV--control implies weak convergence to a common limit]
This is a standard consequence of total variation control and the Portmanteau theorem; see \cite{billingsley1999convergence}.

\label{lem:hist-TV-to-weak}
Let $\Omega\subset\mathbb{C}$ be bounded and let $\{\mathcal{P}_n\}$ be a sequence of
finite measurable partitions of $\Omega$ with mesh$(\mathcal{P}_n)\to0$.
Let $H_{N^{(n)}}:\mathcal{P}(\Omega)\to\Delta_{N^{(n)}}$ be the histogram map induced by
$\mathcal{P}_n$. Suppose $\mu_n,\nu_n\in\mathcal{P}(\Omega)$ satisfy
\[
\|H_{N^{(n)}}(\mu_n)-H_{N^{(n)}}(\nu_n)\|_{\mathrm{TV}}\to0.
\]
If $\mu_n\Rightarrow\mu$ along a subsequence, then $\nu_n\Rightarrow\mu$ along the same subsequence.
\end{lemma}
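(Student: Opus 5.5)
The plan is to compare $\mu_n$ and $\nu_n$ through bounded Lipschitz test functions. Convergence in the bounded-Lipschitz (Dudley) metric characterizes weak convergence of probability measures on a separable metric space, which is the form of the Portmanteau theorem in \cite{billingsley1999convergence}. So it suffices to show that
\[
\Bigl|\int f\,d\mu_n-\int f\,d\nu_n\Bigr|\to0
\]
for every $f$ with $\|f\|_\infty\le1$ and Lipschitz constant $\mathrm{Lip}(f)\le1$. Once this holds, along any subsequence on which $\mu_n\Rightarrow\mu$ we get $\int f\,d\nu_n\to\int f\,d\mu$ for all such $f$. Since bounded Lipschitz functions are convergence-determining, this gives $\nu_n\Rightarrow\mu$ along the same subsequence.

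To prove the displayed estimate, write $\mathcal{P}_n=\{B_1,\dots,B_{N^{(n)}}\}$. I take $\mathrm{mesh}(\mathcal{P}_n)$ to mean $\max_i \mathrm{diam}(B_i)$; this is the interpretation that makes the lemma work. In each nonempty cell choose a representative point $x_i\in B_i$. For any $\rho\in\mathcal{P}(\Omega)$, the Lipschitz bound on each cell gives
\[
\Bigl|\int f\,d\rho-\sum_i f(x_i)\rho(B_i)\Bigr|\le \mathrm{mesh}(\mathcal{P}_n).
\]
Apply this with $\rho=\mu_n$ and with $\rho=\nu_n$. The remaining discrete term is controlled by the histogram distance:
\[
\Bigl|\sum_i f(x_i)\bigl(\mu_n(B_i)-\nu_n(B_i)\bigr)\Bigr|\le \|f\|_\infty\sum_i|\mu_n(B_i)-\nu_n(B_i)| = 2\|f\|_\infty\,\|H_{N^{(n)}}(\mu_n)-H_{N^{(n)}}(\nu_n)\|_{\mathrm{TV}}.
\]
The factor $2$ comes from the paper's convention $\|p-q\|_{\mathrm{TV}}=\tfrac12\sum_i|p_i-q_i|$. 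Combining the three pieces yields, uniformly over the unit ball of bounded Lipschitz functions,
\[
\Bigl|\int f\,d\mu_n-\int f\,d\nu_n\Bigr|\le 2\,\mathrm{mesh}(\mathcal{P}_n)+2\|H_{N^{(n)}}(\mu_n)-H_{N^{(n)}}(\nu_n)\|_{\mathrm{TV}},
\]
and both terms tend to $0$ by hypothesis.

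The main obstacle is a point of bookkeeping rather than a substantive estimate. The hypothesis $\mathrm{mesh}\to0$ must control the diameter of every cell, not merely the number of cells, because the Lipschitz approximation step fails otherwise. Boundedness of $\Omega$ is used so that all measures can be regarded as living on the compact set $\overline\Omega$. This makes weak convergence in $\mathcal{P}(\overline\Omega)$ (or in $\mathcal{P}(\mathbb{C})$) meaningful, and it lets the limit $\mu$ be a probability measure on $\overline\Omega$ by Prokhorov's theorem. Only this tightness is needed: no mass can escape, so the identification of the limit of $\nu_n$ with $\mu$ requires no separate tightness argument.
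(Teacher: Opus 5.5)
Your proof is correct. It follows the same skeleton as the paper's: approximate the test function by a function constant on the cells of $\mathcal{P}_n$, bound the discrete difference by $2\|f\|_\infty$ times the histogram TV, and close with the triangle inequality. Where you differ is the class of test functions.

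The paper takes an arbitrary $f\in C_b(\Omega)$ and asserts that, because $\Omega$ is bounded, $f$ is uniformly continuous, so an $\varepsilon$-close cellwise-constant $s_n$ exists for large $n$. That step is only valid when $\Omega$ is compact, or when $f$ extends continuously to $\overline\Omega$. For example, $f(z)=\sin\bigl(1/(1-|z|)\bigr)$ on the open unit disk is bounded and continuous, but no cellwise-constant function on a finite fine partition approximates it uniformly. So the paper's argument, as written, has a gap for a general bounded $\Omega$, even though the lemma itself is true.

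You restrict to the bounded-Lipschitz unit ball and then use the Portmanteau theorem to pass from bounded Lipschitz test functions to weak convergence. This closes exactly that gap. It also gives more than the paper's qualitative $\varepsilon$-argument: an explicit estimate, independent of any subsequence,
\[
d_{\mathrm{BL}}(\mu_n,\nu_n)\le 2\,\mathrm{mesh}(\mathcal{P}_n)+2\bigl\|H_{N^{(n)}}(\mu_n)-H_{N^{(n)}}(\nu_n)\bigr\|_{\mathrm{TV}} .
\]
The argument also works unchanged whether weak convergence is taken in $\mathcal{P}(\Omega)$, $\mathcal{P}(\overline\Omega)$ or $\mathcal{P}(\mathbb{C})$.

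Two minor points. Your appeal to Prokhorov for the limit $\mu$ is unnecessary, since $\mu$ is given by hypothesis. And you only need the easy direction of the Portmanteau theorem: convergence of $\int f$ for all bounded Lipschitz $f$ implies weak convergence. That direction holds on any metric space, so separability plays no role.
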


\begin{proof}
Fix $f\in C_b(\Omega)$ and $\varepsilon>0$. Since $\Omega$ is bounded, $f$ is uniformly
continuous, so for $n$ large enough there exists a simple function $s_n$ that is constant on each
cell $A\in\mathcal{P}_n$ and satisfies $\|f-s_n\|_\infty\le\varepsilon$.
Write $s_n=\sum_{A\in\mathcal{P}_n} c_A \mathbf{1}_A$, with $|c_A|\le \|f\|_\infty$.

By construction of the histogram map,
\[
\int_\Omega s_n\,d\mu_n-\int_\Omega s_n\,d\nu_n
=\sum_{A\in\mathcal{P}_n} c_A\bigl(\mu_n(A)-\nu_n(A)\bigr).
\]
Hence
\begin{equation*}
\begin{aligned}
\int |s_n\, d\mu_n - s_n\, d\nu_n|
&\le \|f\|_\infty \sum_{A \in \mathcal{P}_n} |\mu_n(A)-\nu_n(A)| \\
&= 2\|f\|_\infty \,\|H_N(\mu_n)-H_N(\nu_n)\|_{\mathrm{TV}}
\;\xrightarrow[n\to\infty]{}\; 0 .
\end{aligned}
\end{equation*}

where the last identity uses the standard convention
$\|p-q\|_{\mathrm{TV}}=\frac12\sum_i |p_i-q_i|$ for probability vectors.

Now decompose
\[
\Bigl|\int f\,d\nu_n-\int f\,d\mu\Bigr|
\le 
\Bigl|\int (f-s_n)\,d\nu_n\Bigr|
+\Bigl|\int s_n\,d\nu_n-\int s_n\,d\mu_n\Bigr|
+\Bigl|\int s_n\,d\mu_n-\int f\,d\mu\Bigr|.
\]
The first term is $\le \varepsilon$ by $\|f-s_n\|_\infty\le\varepsilon$.
The middle term tends to $0$ by the histogram TV assumption.
For the last term, write
\begin{equation*}
\begin{aligned}
\Bigl|\int s_n\,d\mu_n-\int f\,d\mu\Bigr|
&\le
\Bigl|\int (s_n-f)\,d\mu_n\Bigr|
 + \Bigl|\int f\,d\mu_n-\int f\,d\mu\Bigr| \\
&\le \varepsilon
 + \Bigl|\int f\,d\mu_n-\int f\,d\mu\Bigr| .
\end{aligned}
\end{equation*}

Along the chosen subsequence, $\mu_n\Rightarrow \mu$, so $\int f\,d\mu_n\to\int f\,d\mu$,
hence the last term is eventually $\le \varepsilon$.
Putting the bounds together yields
\[
\limsup_{n\to\infty}\Bigl|\int f\,d\nu_n-\int f\,d\mu\Bigr|\le 2\varepsilon.
\]
Since $\varepsilon>0$ was arbitrary, $\int f\,d\nu_n\to\int f\,d\mu$ for all $f\in C_b(\Omega)$,
i.e.\ $\nu_n\Rightarrow\mu$ along the same subsequence.
\end{proof}

\subsection*{Polish-space interpretation and limiting equivalence}

Let $\mu_C^{(n)},\mu_M^{(n)}$ be empirical measures of the two clouds  
at increasing sampling densities and histogram resolutions.  
Let $P_C^{(n)} = H_{N^{(n)}}(\mu_C^{(n)})$ and  
$P_M^{(n)} = H_{N^{(n)}}(\mu_M^{(n)})$ denote their histograms.

Assume:
\[
D_{\mathrm{KL}}(P_M^{(n)} \| X^{C\to M}_{T_n}) \le \delta_n,\qquad \delta_n\to0.
\]
By Pinsker and Proposition~\ref{prop:GI-contraction},
\[
\|P_M^{(n)} - X^{C\to M}_{T_n}\|_{\mathrm{TV}} \to 0.
\]
Hence $P_C^{(n)} - P_M^{(n)}\to 0$ in total variation.  

By standard consistency of histograms on a bounded domain and Prokhorov compactness,
\[
   \mu_C^{(n)} \Rightarrow \mu,
   \qquad
   \mu_M^{(n)} \Rightarrow \mu
\]
for a common limit $\mu\in\mathcal{P}(\mathbb{C})$.

\subsection*{Correspondence via a limiting boundary law}

\begin{theorem}[Inverse Eigenvalue Loci--Mandelbrot Correspondence]
The weak convergence framework relies on classical compactness and tightness arguments in probability theory; see \cite{billingsley1999convergence}. To the best of our knowledge, the specific structural formulation connecting inverse eigenvalue loci and Mandelbrot empirical measures has not appeared previously.

\label{thm:big-CM-equivalence}
Let $\{\mu_C^{(n)}\}, \{\mu_M^{(n)}\}\subset\mathcal{P}(\mathbb{C})$  
denote empirical measures generated from:

\begin{enumerate}[label=(\roman*)]
\item the inverse eigenvalue loci of generalized Lucas companion matrices, and  
\item a near-boundary sampling of the Mandelbrot set.
\end{enumerate}

Let $H^{(\sigma)}_{N^{(n)}}$ denote the histogram map on a regular grid of resolution
$N^{(n)}=b_n^2$, followed by mollification with a fixed kernel of bandwidth $\sigma>0$,
and let
\[
P_C^{(n)} := H^{(\sigma)}_{N^{(n)}}(\mu_C^{(n)}),\qquad
P_M^{(n)} := H^{(\sigma)}_{N^{(n)}}(\mu_M^{(n)}).
\]

Assume:
\begin{enumerate}[label=(\alph*)]
\item there exists an optimal-transport-based matching followed by Procrustes alignment
between the inverse eigenvalue point clouds and the Mandelbrot boundary samples,
such that the aligned clouds remain within uniformly bounded Hausdorff distance
except on negligible sets, as justified by the constructions of
Sections~\ref{sec:OT_matching}--\ref{Sec:PointMatching_Methods}; and
\item the informational discrepancy
\[
\delta_n^\ast := D_{\mathrm{KL}}\bigl(P_M^{(n)} \,\|\, P_C^{(n)}\bigr)
\]
satisfies $\delta_n^\ast \to 0$ as $n\to\infty$.
\end{enumerate}

Then there exists a common limit measure $\mu\in\mathcal{P}(\mathbb{C})$ such that
\[
   \mu_C^{(n)} \Rightarrow \mu,
   \qquad
   \mu_M^{(n)} \Rightarrow \mu.
\]
Thus the inverse eigenvalue loci and Mandelbrot clouds share the same  
\emph{limiting boundary law} in the geometric–information sense.
\end{theorem}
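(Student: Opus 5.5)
The natural route is to chain Pinsker's inequality (Lemma~\ref{lem:Pinsker}), Prokhorov compactness, and a variant of Lemma~\ref{lem:hist-TV-to-weak} adapted to the mollified histogram maps $H^{(\sigma)}_{N^{(n)}}$. I expect this to yield only subsequential common limits; upgrading to convergence of the full sequences needs an extra hypothesis, discussed in the last paragraph.

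\emph{Step 1 (TV control).} Apply Lemma~\ref{lem:Pinsker} with $P=P_M^{(n)}$ and $Q=P_C^{(n)}$. Hypothesis (b) then gives
\[
\|P_M^{(n)}-P_C^{(n)}\|_{\mathrm{TV}}^2\le \tfrac12\,\delta_n^\ast\to0 .
\]
No use of the interpolation of Proposition~\ref{prop:GI-contraction} is needed here. Hypothesis (b) already compares the two histograms directly.

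\emph{Step 2 (tightness).} Use hypothesis (a), together with the fixed sampling window $\Omega$ ("no mass escapes", Proposition~\ref{prop:GI-numerical-verification}(iv)), to place all the aligned measures $\mu_C^{(n)}$ and $\mu_M^{(n)}$ in a common compact set $\overline{\Omega}$. The "negligible sets" in (a) must be read as sets whose mass tends to $0$, so that they do not affect tightness. Prokhorov's theorem then makes both families precompact. Given any subsequence, pass to a further subsequence $n_k$ along which
\[
\mu_C^{(n_k)}\Rightarrow\mu \quad\text{and}\quad \mu_M^{(n_k)}\Rightarrow\nu .
\]

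\emph{Step 3 (identifying the limits).} This is where the fixed bandwidth $\sigma$ matters, and where Lemma~\ref{lem:hist-TV-to-weak} cannot be quoted verbatim. Mollification destroys resolution below scale $\sigma$, so TV-closeness of the mollified histograms does \emph{not} force $\mu_C^{(n)}$ and $\mu_M^{(n)}$ to be close in TV. The argument goes as follows.
\begin{itemize}[label=$\cdot$]
\item As the mesh tends to $0$, the grid-smoothed histogram of $\mu_n$, viewed as a density on $\Omega$, differs from the convolution $k_\sigma*\mu_n$ by an error tending to $0$ uniformly. This uses uniform continuity of the kernel $k_\sigma$, as in the simple-function approximation in the proof of Lemma~\ref{lem:hist-TV-to-weak}.
\item Weak convergence then gives $k_\sigma*\mu_n\to k_\sigma*\mu$ pointwise along the subsequence, and similarly for $\nu$.
\item Step 1 therefore forces $k_\sigma*\mu=k_\sigma*\nu$.
\item Choose $k_\sigma$ Gaussian, or more generally any kernel with nowhere-vanishing Fourier transform. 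Then convolution by $k_\sigma$ is injective on finite measures, so $\mu=\nu$.
\end{itemize}
I expect this step to be the main technical obstacle. The theorem silently requires such a choice of kernel: a compactly supported box kernel would not suffice.

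\emph{Step 4 (passing to full sequences).} Steps 2 and 3 show that every subsequence has a further subsequence along which both $\mu_C^{(n)}$ and $\mu_M^{(n)}$ converge to the \emph{same} measure. They do not show that this measure is independent of the subsequence. The statement's conclusion for the full sequences follows only if one of the two sequences is known to converge. Two ways to supply this:
\begin{itemize}[label=$\cdot$]
\item Add a hypothesis that $\mu_M^{(n)}$ converges. This is natural for increasingly fine near-boundary samples, which one might expect to approach a fixed law such as harmonic measure of $M$ if the sampling is done appropriately.
\item Alternatively, show that $\mu_C^{(n)}$ converges, using the known limiting zero distributions of companion polynomials of linear recurrences.
\end{itemize}
With either addition, the subsequence principle gives $\mu_C^{(n)}\Rightarrow\mu$ and $\mu_M^{(n)}\Rightarrow\mu$ along the full sequences. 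Without it, I would state the result honestly in its subsequential form: the two families have identical sets of weak limit points. I expect this to be what the argument actually supports.
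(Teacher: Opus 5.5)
Your skeleton (Pinsker applied to (b), Prokhorov on a fixed bounded window, identification of subsequential limits) is exactly the paper's. The paper then quotes Lemma~\ref{lem:hist-TV-to-weak} verbatim for the mollified map $H^{(\sigma)}_{N^{(n)}}$, obtaining $\mu_M^{(n_k)}\Rightarrow\mu$ along a subsequence on which $\mu_C^{(n_k)}\Rightarrow\mu$. It concludes that, since subsequential limits coincide, both full sequences converge.

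The two places where you decline to follow it are genuine defects of that proof, and your diagnosis is right in both.
\begin{itemize}
\item The lemma is proved for raw partition histograms. Its proof uses $\int s_n\,d\mu_n=\sum_A c_A\,\mu_n(A)$, i.e.\ that histogram entries are exact cell masses, and this fails after mollification at fixed bandwidth.
\item The final inference is a non sequitur. Let $\mu_C^{(n)}=\mu_M^{(n)}$ alternate between two fixed empirical measures in $\Omega$. Then (a) and (b) hold with $\delta_n^\ast=0$, yet neither sequence converges.
\end{itemize}
So the statement as written does not follow from its hypotheses.

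Two corrections to your own argument.

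\textbf{The box-kernel remark is wrong; the zeros of $\hat k_\sigma$ are not the issue.} If $\rho$ is a finite signed measure supported in the compact set $\overline{\Omega}$, then $\hat\rho$ is real-analytic. Since $\hat k_\sigma$ is continuous with $\hat k_\sigma(0)=1$, the identity $\hat k_\sigma\hat\rho\equiv0$ forces $\hat\rho=0$ near $0$, hence everywhere. So convolution by any normalized integrable kernel is injective here. A box kernel strains your argument only through its discontinuity, which your first two bullets rely on.

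What you actually skipped is an extension step. TV-closeness of the window histograms yields only $k_\sigma*\mu=c\,k_\sigma*\nu$ \emph{on $\Omega$}, where $c$ is a possible renormalization constant. Before taking Fourier transforms you must extend this identity to $\mathbb{R}^2$:
\begin{itemize}
\item for the Gaussian, real-analyticity of $k_\sigma*\mu$ does this;
\item for a compactly supported kernel, require the supports of $\mu_C^{(n)},\mu_M^{(n)}$ to stay at least a kernel-radius away from $\partial\Omega$.
\end{itemize}
Evaluating Fourier transforms at frequency $0$ then gives $c=1$.

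\textbf{Your fallback undersells Steps~2--3.} Every subsequence has a further subsequence along which both sequences converge to the \emph{same} limit. Hence $d_{\mathrm{LP}}(\mu_C^{(n)},\mu_M^{(n)})\to0$ in the L\'evy--Prokhorov metric. This is strictly stronger than equality of limit-point sets: two sequences alternating out of phase between the same two measures have equal limit-point sets but are excluded. That is the correct subsequence-free form of the result, and adding convergence of either sequence recovers the stated conclusion. For $\mu_C^{(n)}$, convergence of the raw zero distributions would not be enough on its own; you would also need the $n$-dependent Procrustes parameters to converge.
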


\begin{proof}
By Pinsker's inequality (Lemma~\ref{lem:Pinsker}), assumption~(b) implies
\[
\|P_M^{(n)} - P_C^{(n)}\|_{\mathrm{TV}}
\le \sqrt{\tfrac12\,\delta_n^\ast}
\xrightarrow[n\to\infty]{} 0.
\]

Now work on a fixed bounded sampling window $\Omega\subset\mathbb{C}$ containing all clouds.
By Prokhorov's theorem, $\mathcal{P}(\Omega)$ is weakly compact, so
$\{\mu_C^{(n)}\}$ admits a weakly convergent subsequence
$\mu_C^{(n_k)} \Rightarrow \mu$.

Since
\[
P_C^{(n)} = H^{(\sigma)}_{N^{(n)}}(\mu_C^{(n)}),
\qquad
P_M^{(n)} = H^{(\sigma)}_{N^{(n)}}(\mu_M^{(n)}),
\]
the total-variation convergence of the histograms yields
\[
\|H^{(\sigma)}_{N^{(n_k)}}(\mu_C^{(n_k)})
      - H^{(\sigma)}_{N^{(n_k)}}(\mu_M^{(n_k)})\|_{\mathrm{TV}}
\to 0.
\]
By Lemma~\ref{lem:hist-TV-to-weak}, it follows that
$\mu_M^{(n_k)} \Rightarrow \mu$ along the same subsequence.

Thus every subsequential weak limit of $\{\mu_C^{(n)}\}$ is also a subsequential
weak limit of $\{\mu_M^{(n)}\}$, and conversely, hence both sequences converge
weakly to the same limit $\mu\in\mathcal{P}(\Omega)\subset\mathcal{P}(\mathbb{C})$.
\end{proof}

\begin{remark}
The limit $\mu$ serves as a geometric–information signature describing a shared  
near–boundary behavior of the two systems.  
Assumption (a) aligns geometry; assumption (b) aligns information;  
Prokhorov compactness then yields equality of limits.
\end{remark}

\section*{Appendix B. Green--Function Equipotentials of Inverse Lucas Spectra}\label{AppB:GreenFunction}
\pdfbookmark[1]{Appendix B: Green--Function Equipotentials}{app:Green}

Let $\mathcal{F}$ denote a fixed family of linear recurrences of Lucas type,
and for each truncation parameter $N$ let $\Lambda_N^{(\mathcal{F})} \subset \mathbb{C}$
be the multiset of eigenvalues of the associated companion matrices.
Define the inverse spectrum
\[
\mathcal{C}_N^{(\mathcal{F})} := \{ \lambda^{-1} : \lambda \in \Lambda_N^{(\mathcal{F})} \}.
\]

Let $g_M(c) = \log |\Phi(c)|$ denote the Green function of the Mandelbrot set,
where $\Phi$ is the Böttcher coordinate on $\mathbb{C}\setminus M$.
Restrict attention to those $c \in \mathcal{C}_N^{(\mathcal{F})}$ for which $g_M(c) > 0$.

Define the empirical measures
\[
\nu_N^{(\mathcal{F})}
:= \frac{1}{|\mathcal{C}_N^{(\mathcal{F})}|}
   \sum_{c \in \mathcal{C}_N^{(\mathcal{F})}}
   \delta_{g_M(c)} .
\]

\medskip

\subsection*{Numerical Proposition (Green--function spectral invariants and equipotential concentration).}

This statement is supported by numerical experiments and should be interpreted as empirical.

For each Lucas-type family $\mathcal{F}$, the sequence of empirical measures
$\{\nu_N^{(\mathcal{F})}\}_{N\ge1}$ is numerically tight and convergent.
More precisely:

\begin{enumerate}
\item There exists a bounded interval $[a_{\mathcal{F}}, b_{\mathcal{F}}] \subset (0,\infty)$
such that asymptotically all mass of $\nu_N^{(\mathcal{F})}$ lies in this interval.

\item The median
\[
m_{\mathcal{F}} := \lim_{N\to\infty} \operatorname{median}\big(g_M(c)\big)
\]
exists numerically and is stable across $N$.

\item When expressed in Böttcher coordinates,
the inverse spectrum concentrates in the annulus
\[
\{ c \in \mathbb{C} : e^{a_{\mathcal{F}}} \le |\Phi(c)| \le e^{b_{\mathcal{F}}} \}.
\]

\item Different recurrence families share a universal radial profile
up to translation by the scalar $m_{\mathcal{F}}$.
\end{enumerate}
\subsection*{Numerical argument.}

The conclusions above follow from a combination of empirical tightness,
stability under truncation, and consistency across recurrence families.

First, for each fixed family $\mathcal{F}$ and increasing truncation parameter $N$,
the empirical distributions $\nu_N^{(\mathcal{F})}$ of Green-function values
exhibit uniformly bounded support.
Numerically, the fraction of mass outside a fixed interval
$[a_{\mathcal{F}}, b_{\mathcal{F}}] \subset (0,\infty)$ decays rapidly with $N$.
In particular, for every $\varepsilon>0$ there exists $N_0$ such that
\[
\nu_N^{(\mathcal{F})}\big((0,\infty)\setminus[a_{\mathcal{F}}, b_{\mathcal{F}}]\big)
< \varepsilon
\quad \text{for all } N \ge N_0,
\]
which establishes tightness of the sequence $\{\nu_N^{(\mathcal{F})}\}$.
By Prokhorov’s theorem, this tightness implies that
$\{\nu_N^{(\mathcal{F})}\}$ is relatively compact in the weak topology.
In particular, every subsequence admits a further subsequence that converges
weakly to a probability measure on $(0,\infty)$.

Second, summary statistics of $\nu_N^{(\mathcal{F})}$—in particular the median
$m_{\mathcal{F}}$ and interquantile ranges—stabilize as $N$ increases.
This stability is observed uniformly across matrix sizes and is insensitive
to moderate changes in sampling density, indicating convergence in distribution
at the numerical resolution considered here.

Third, when mapped into Böttcher coordinates via $\Phi$,
the inverse spectra concentrate radially in a narrow annulus whose width
remains bounded as $N$ grows.
Different recurrence families differ primarily by a radial translation,
captured by the scalar $m_{\mathcal{F}}$, while sharing a common normalized profile.

Taken together, these observations establish the existence of
family-dependent Green-function spectral invariants and demonstrate that
inverse Lucas spectra concentrate on Mandelbrot equipotentials in a robust,
numerically stable manner.
\hfill $\blacksquare$


\medskip

\subsection*{Interpretation.}
The scalar $m_{\mathcal{F}}$ defines the \emph{Green spectral modulus}
reported in Table~\ref{tab:green_spectral_modulus} for the recurrence family
$\mathcal{F}$. It captures the typical escape rate of the inverse spectrum
relative to the Mandelbrot potential and is invariant under
angular distortions of the spectrum.

The role of Prokhorov’s theorem here is purely interpretive:
it provides the measure-theoretic framework within which the observed
numerical concentration implies existence of limiting Green–function
distributions.

\medskip

\subsection*{Relation to Conformal Mapping Results.}
In earlier sections, numerical conformal maps (following methods of Trefethen
and collaborators) were constructed from Lucas loci to the main cardioid
of the Mandelbrot set.
Those constructions are local in nature and rely on boundary-integral methods.

The present equipotential analysis is global and harmonic:
the Green function governs both the conformal uniformization
near the cardioid and the radial organization of the inverse spectrum
throughout $\mathbb{C}\setminus M$.
Thus, the cardioid conformal map may be viewed as a distinguished
local representative of a broader family of Green-function equipotentials.

\medskip

\section*{Appendix C. Quasi-Conformal Distortion Diagnostics}\label{AppC:QuasiConformal}
\addcontentsline{toc}{section}{Appendix C. Quasi-Conformal Distortion Diagnostics}
This appendix reports numerical diagnostics of local quasiconformal distortion
for the Lucas–Mandelbrot matching, computed from Jacobian estimates on matched
point clouds at increasing resolutions.
\begin{table}[!htbp]
\centering
\small
\setlength{\tabcolsep}{5pt}
\caption{Quasi-conformal distortion statistics for the Lucas–Mandelbrot matching.
Here $\mathrm{median}(K)$ denotes the median local dilatation and $K_{95}$ the
$95$th percentile.}
\label{tab:qc_distortion}
\begin{tabular}{rccc}
\toprule
$N$ & $\mathrm{median}(K)$ & $K_{95}$ & $K_{\max}$ \\
\midrule
500  & 3.08 & 14.00 & 86.52 \\
1000 & 2.42 & 11.86 & 189.86 \\
1500 & 1.84 & 7.30 & 61.51 \\
2000 & 2.00 & 7.88 & 48.55 \\
2400 & 1.88 & 6.69 & 28.26 \\
\bottomrule
\end{tabular}
\end{table}

\subsection*{Outlook.}
Possible analytic refinements include:
(i) spectral-radius bounds implying tightness,
(ii) convergence in Wasserstein metrics,
or (iii) angular equidistribution results.
These are not required for the present conclusions
and are left for future work.

\section*{Declaration of generative AI and AI-assisted technologies in the manuscript preparation process}

During the preparation of this work, the author used large language models (LLMs), primarily ChatGPT, for assistance with organizational tasks, including code structuring, \LaTeX{} editing, and improving the clarity of mathematical exposition. 

All theoretical results, numerical procedures (including quasi-conformal distortion analysis), and interpretations were conceived, implemented, and validated solely by the author. LLMs were not used to generate mathematical proofs, derive results, or perform numerical computations. 

Any bibliographic suggestions or secondary reference lookups obtained through AI-assisted tools were verified manually against the primary literature. 

After using these tools, the author reviewed and edited all content as needed and takes full responsibility for the content of the published article.
\bibliographystyle{elsarticle-num}

\bibliography{references}

\end{document}